\documentclass[12pt,a4paper]{article}
\usepackage{amssymb,amsmath}
\usepackage{hyperref}
\usepackage{amsthm}
\usepackage{ulem}
\newtheorem*{theorem}{Theorem}
\usepackage{chngcntr}
\usepackage{color}
\usepackage{graphicx}
\usepackage[dvipsnames]{xcolor}
\usepackage[symbol]{footmisc}
\usepackage[english,ngerman]{babel}
\usepackage[top=2.0cm,bottom=2.0cm,left=2.0cm,right=2.0cm]{geometry}

\begin{document}
\begin{center}
\LARGE{\textbf{A fixed point iteration method
\\for the arctangent with any odd order
\\of convergence based on sine and cosine}}
\[\]
\[\]
\large
{Alois Schiessl}
\footnote[1]{{University of Regensburg}\,\;E-Mail: \texttt{aloisschiessl@posteo.de}}
\selectlanguage{ngerman}
\centerline{ }
\centerline{\today}
\end{center}
\selectlanguage{english}
\[\]
\[\]
\[\]
\begin{abstract}
In this paper, we present a fixed point method for the arctangent based on sine and cosine. Let $t\in \mathbb{R}^{+}$ and $P\in \mathbb{N}$. We define:
\[T\left(x\right)=x-\sum_{k=1}^{P}\,\frac{\left(-1\right)^{k-1}}{2\,k-1}
\left(\frac
{\sin\!\left(x\right)-t\cos\!\left(x\right)}
{\cos\!\left(x\right)+t\sin\!\left(x\right)}
\right)^{2\,k-1}.\]
For every initial value $x_0$ sufficiently close to $\arctan\left(t\right)$, the sequence
\[x_{n+1}=T\left(x_{n}\right)\;;\,n=0,1,\ldots\]
is converging to $\arctan\left(t\right)$ with order of convergence exactly $\left(2\,P+1\right)$. The computational test we performed demonstrates the efficiency of the method.
\selectlanguage{ngerman}
\[\]
\[\textbf{Zusammenfassung}\]
In dieser Abhandlung stellen wir ein Fixpunktverfahren zur Berechnung des arcustangens auf Basis von sinus und cosinus  vor. Es sei $t\in \mathbb{R}^{+}$ und $P\in\mathbb{N}$. Wir definieren:
\[T\left(x\right)=x-\sum_{k=1}^{P}\,\frac{\left(-1\right)^{k-1}}{2\,k-1}
\left(\frac
{\sin\!\left(x\right)-t\cos\!\left(x\right)}
{\cos\!\left(x\right)+t\sin\!\left(x\right)}\right)
^{2\,k-1}.\]
Für jeden Startwert $x_0$ hinreichend nahe bei $\arctan\left(t\right)$ konvergiert die Folge
\[x_{n+1}=T\left(x_{n}\right)\;;\,n=0,1,\ldots\]
gegen $\arctan\left(t\right)$ mit Konvergenzordnung genau $\left(2\,P+1\right)$. Anhand einer praktischen Berechnung von $\frac{\pi}{4}$ zeigen wir die Effizienz des Verfahrens.
\[\text{Deutsche Version ab Seite 17}\]
\end{abstract}
\section{Introduction and main result}
We start with the $\arctan$ power series \cite{AS}
\[
\arctan\left(u\right)
=u-\frac{u^3}{3}+\frac{u^5}{5}-\frac{u^7}{7}+-\dots
=\sum_{k=1}^{\infty}\,\frac{\left(-1\right)^{k-1}}{2\,k-1}\,u^{\,2\,k-1}\,.
\]
The power series is absolutely convergent for $\left|u\right|< 1$. We only use the first $P$ terms: 
\[
u-\frac{u^3}{3}+\frac{u^5}{5}-\frac{u^7}{7}+-\ldots+\frac{\left(-1\right)^{P-1}}{2\,P-1}\,u^{2\,P-1}
=\sum_{k=1}^{P}\frac{\left(-1\right)^{k-1}}{2\,k-1}\,u^{2\,k-1}\,.
\]
Let $t\in\mathbb{R}^{+}$ and substituting
\[u=\frac{\sin\!\left(x\right)-t\cos\!\left(x\right)}{\cos\!\left(x\right)+t\sin\!\left(x\right)}\]
leads to the finite sum
\[
\sum_{k=1}^{P}\,\frac{\left(-1\right)^{k-1}}{2\,k-1}
\left(\frac
{\sin\!\left(x\right)-t\cos\!\left(x\right)}
{\cos\!\left(x\right)+t\sin\!\left(x\right)}
\right)^{2\,k-1}.
\]
With this sum we define a fixed point function that enables the calculation of $\arctan\left(t\right)$ with any odd order of convergence. Since $\arctan\left(-t\right)$ = $-\arctan\left(t\right)$, it is sufficient considering only positiv values.
$ \\ $
$ \\ $
Now let's formulate our statement:
\begin{theorem}
$ \\ $
Let $t\in{\mathbb{R^{+}}}$ and $P\in{\mathbb{N}}$. We define the function
\[T:\;\;[\,0\,,\frac{\pi}{2}]\;\;\rightarrow\mathbb{R}\;;\]
\[\quad x\mapsto T\left(x\right)\;;\]
\[T\left(x\right)=x-\sum_{k=1}^{P}\,\frac{\left(-1\right)^{k-1}}{2\,k-1}
\left(\frac{\sin\!\left(x\right)-t\cos\!\left(x\right)}{\cos\!\left(x\right)+t\sin\!\left(x\right)}\right)
^{2\,k-1}.\]
Then, the following statements hold, for all $t\in \mathbb{R}^+:$
\begin{itemize}
\item[(a)]
$T\left(x\right)$ is a fixed point function with $\arctan\left(t\right)$ as fixed point;\\ that means $T\bigl(\arctan\left(t\right)\bigr)=\arctan\left(t\right)\,.$
\item[(b)]
For the derivatives at the fixed point applies $:$
\begin{align*}
T^{\left(k\right)}{\bigl(\arctan\left(t\right)\bigr)}=\left\{
\begin{aligned}
0\quad\qquad\qquad\qquad\quad 0\le &\,k\leq 2\, P
\\
\\
\left(-1\right)^{P}\left(2\cdot P\right)\,!
\qquad\qquad\quad k=\,&2\, P+1
\\ 
\end{aligned} 
\right.
\end{align*}
\item[(c)] For every initial value $x_0$ sufficiently close to $\arctan\left(t\right)$, the sequence
\begin{align*}
x_{n+1}&=T\left(x_{n}\right)\,,\;n=0,1,\,\ldots
\end{align*}
is converging to $\arctan\left(t\right)$ with order of convergence exactly $\left(2\,P+1\right)\,.$
\end{itemize}
\end{theorem}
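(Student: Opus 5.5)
The plan is to identify the fraction inside the sum as a tangent of a difference. With $\alpha=\arctan(t)\in(0,\frac{\pi}{2})$ we have $t=\tan\alpha$. Dividing numerator and denominator by $\cos(\alpha)^{-1}$, i.e.\ multiplying both by $\cos\alpha>0$, gives
\[
\frac{\sin(x)-t\cos(x)}{\cos(x)+t\sin(x)}=\frac{\sin(x)\cos(\alpha)-\cos(x)\sin(\alpha)}{\cos(x)\cos(\alpha)+\sin(x)\sin(\alpha)}=\frac{\sin(x-\alpha)}{\cos(x-\alpha)}=\tan(x-\alpha).
\]
For $x\in[0,\frac{\pi}{2}]$ we have $|x-\alpha|<\frac{\pi}{2}$, so the denominator never vanishes and $T$ is smooth (indeed real analytic) on the whole interval. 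Writing $h=x-\alpha$ and $u=\tan(h)$, we obtain
\[
T(x)=\alpha+h-S_P\bigl(\tan h\bigr),\qquad S_P(u)=\sum_{k=1}^{P}\frac{(-1)^{k-1}}{2k-1}u^{2k-1}.
\]
Part (a) follows at once, since $h=0$ gives $u=0$ and hence $T(\alpha)=\alpha$.

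For (b), I will work with power series in $h$ near $0$. For $|h|<\frac{\pi}{4}$ we have $|\tan h|<1$, so $h=\arctan(\tan h)=S_P(\tan h)+R_P(\tan h)$, where
\[
R_P(u)=\sum_{k=P+1}^{\infty}\frac{(-1)^{k-1}}{2k-1}u^{2k-1}=\frac{(-1)^{P}}{2P+1}u^{2P+1}+O\bigl(u^{2P+3}\bigr).
\]
Hence $T(\alpha+h)-\alpha=R_P(\tan h)$. Since $\tan h=h+O(h^3)$, we get $(\tan h)^{2P+1}=h^{2P+1}+O(h^{2P+3})$, and therefore
\[
T(\alpha+h)=\alpha+\frac{(-1)^P}{2P+1}h^{2P+1}+O\bigl(h^{2P+3}\bigr).
\]
Because $T$ is analytic near $\alpha$, comparing this with its Taylor expansion gives $T^{(k)}(\alpha)=0$ for $1\le k\le 2P$ (the case $k=0$ being read as $T(\alpha)-\alpha=0$). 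It also gives
\[
T^{(2P+1)}(\alpha)=(2P+1)!\cdot\frac{(-1)^P}{2P+1}=(-1)^P(2P)!.
\]
The only point needing care here is justifying the composition of the series. I would handle it by noting that $R_P\circ\tan$ is a composition of analytic functions, so its Taylor coefficients are obtained by formal substitution, and that only the lowest-order term contributes up to degree $2P+1$.

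For (c), I will use the standard theorem on fixed point iterations. If $T$ is $C^{m}$ near a fixed point $\alpha$ with $T'(\alpha)=\dots=T^{(m-1)}(\alpha)=0$ and $T^{(m)}(\alpha)\neq0$, then the iteration converges locally with order exactly $m$. Concretely, $T'(\alpha)=0$ makes $T$ a contraction on a small interval $[\alpha-\delta,\alpha+\delta]$, which it maps into itself, so $x_n\to\alpha$. Taylor's theorem then gives
\[
\frac{x_{n+1}-\alpha}{(x_n-\alpha)^{2P+1}}\to\frac{T^{(2P+1)}(\alpha)}{(2P+1)!}=\frac{(-1)^P}{2P+1}\neq0,
\]
which gives order exactly $2P+1$. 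One caveat is that if some $x_n=\alpha$ exactly, the sequence is eventually constant, and this case is handled trivially.

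The main obstacle is the bookkeeping in (b), namely making rigorous that no term of order $\le 2P$ survives and that the coefficient of $h^{2P+1}$ is exactly $\frac{(-1)^P}{2P+1}$. The identity $\arctan(\tan h)=h$ reduces this to the single remainder term, which resolves the difficulty.
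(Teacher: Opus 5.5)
Your proof is correct, but it takes a different route from the paper's. The paper never recognizes the fraction as $\tan(x-\arctan t)$. It checks $f(\arctan t)=0$ by substituting $\sin(\arctan t)=t/\sqrt{1+t^2}$ and $\cos(\arctan t)=1/\sqrt{1+t^2}$, and it takes $f'=1+f^2$ from Maple. It then telescopes the sum to the closed form $T'(x)=(-1)^P f(x)^{2P}$ and evaluates $\bigl(f^{2P}\bigr)^{(2P)}$ at the fixed point using a first-order Lagrange expansion of $f$ and the Leibniz rule. Part (c) is handled the same way you propose.

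Your subtraction-formula identity explains both of the paper's ad hoc facts at once. Your observation that $T(\alpha+h)-\alpha=R_P(\tan h)$ is exactly the tail of the arctangent series then settles all of (b) together:
the vanishing of $T^{(k)}(\alpha)$ for $2\le k\le 2P$, which the paper leaves implicit;
the value $(-1)^P(2P)!$;
and the error constant $(-1)^P/(2P+1)$.
It also avoids the paper's weak step, where the Leibniz rule is applied to $\bigl(1+\tfrac{f''(\xi)}{2}(x-\alpha)\bigr)^{2P}$ as if $\xi$ did not depend on $x$.

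What the paper's route gives in return is a global identity rather than a local expansion. In your notation it reads $T'(x)=1-S_P'(u)(1+u^2)=(-1)^P\tan^{2P}(x-\alpha)$. So $|T'|<1$ exactly when $|x-\alpha|<\pi/4$, which would let you make ``sufficiently close'' in (c) explicit.

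Finally, you are right that the case $k=0$ of (b) cannot be meant literally, since $T(\alpha)=\alpha\neq 0$. The paper's own proof restates (b) for $1\le k\le 2P$.
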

\section{Proof of the Theorem}
In this section, we prove the statements of the theorem.
In order to simplify the notation for
\[
T\left(x\right)=x-\sum_{k=1}^{P}\,\frac{\left(-1\right)^{k-1}}{2\,k-1}
\left(\frac
{\sin\!\left(x\right)-t\cos\!\left(x\right)}
{\cos\!\left(x\right)+t\sin\!\left(x\right)}
\right)^{2\,k-1},
\]
we define the function:
$\\ $
Let $t\in \mathbb{R^+}$, 
\[f:\;\;[0,\frac{\pi}{2}]\;\;\rightarrow\mathbb{R}\;;\]
\[f\left(x\right)=\frac
{\sin\!\left(x\right)-t\cos\!\left(x\right)}
{\cos\!\left(x\right)+t\sin\!\left(x\right)}\,.\]
The fixed point function is then easier to write as
\[
T\left(x\right)=x-\sum_{k=1}^{P}\,\frac{\left(-1\right)^{k-1}}{2\,k-1}f\left(x\right)^{2\,k-1}
\]
\subsection{Proof of fixed point}
We must prove that the following statement is true at the fixed point $\arctan\left(t\right)$\;:
\[
T\bigl(\arctan\left(t\right)\bigr)=\arctan\left(t\right).
\]
It suffices to prove that the following relation holds
\[
f\bigl(\arctan\left(t\right)\bigr)=\frac{\sin\!\bigl(\arctan\left(t\right)\bigr)-t\cos\!\bigl(\arctan\left(t\right)\bigr)}{\cos\!\bigl(\arctan\left(t\right)\bigr)+t\sin\!\bigl(\arctan\left(t\right)\bigr)}=0\,.
\]
From a mathematical handbook \cite{SO}, we obtain the following statements\,:
\[\sin\,\bigl(\arctan\left(t\right)\bigr)=\frac{t}{\sqrt{1+t^2}}\;\;;\;t\in \mathbb{R}\;;\]
\[\cos\,\bigl(\arctan\left(t\right)\bigr)=\frac{1}{\sqrt{1+t^2}}\;\;;\;t\in \mathbb{R}\;;\]
Plugging in gives
\[
f\bigl(\arctan\left(t\right)\bigr)
=\frac{\frac{t}{\sqrt{1+t^2}}-t\frac{1}{\sqrt{1+t^2}}}{\frac{1}{\sqrt{1+t^2}}+t\frac{t}{\sqrt{1+t^2}}}
=\frac{\frac{t}{\sqrt{1+t^2}}-\frac{t}{\sqrt{1+t^2}}}{\frac{1}{\sqrt{1+t^2}}+\frac{t^2}{\sqrt{1+t^2}}}
=\frac{0}{\frac{1}{\sqrt{1+t^2}}+\frac{t^2}{\sqrt{1+t^2}}}=0\,.
\]
From $f\bigl(\arctan\left(t\right)\bigr)=0$ follows directly that
\[
T\bigl(\arctan\left(t\right)\bigr)=\arctan\left(t\right)-\sum_{k=1}^{P}\,\frac{\left(-1\right)^{k-1}}{2\,k-1}
{\underbrace{f\bigl(\arctan\left(t\right)\bigr)}_{=0}}^{2\,k-1}=\arctan\left(t\right)\,.
\]
This proves the fixed point property.
\subsection{Proof of the derivatives}
Next, we turn to the derivatives. This requires a little bit more work.
\subsubsection{First Derivative}
Let's start with the first derivative. We use the fixed point function in the representation
\[
T\left(x\right)=x-\sum_{k=1}^{P}\,\frac{\left(-1\right)^{k-1}}{2\,k-1}f\left(x\right)^{2\,k-1}
\]
with
\[
f\left(x\right)=\frac{\sin\!\left(x\right)-t\cos\!\left(x\right)}{\cos\!\left(x\right)+t\sin\!\left(x\right)}\,.
\]
Differentiating the fixed point function yields
\begin{align*}
T\,'\left(x\right)
&=\left(x-\sum_{k=1}^{P}\,\frac{\left(-1\right)^{k-1}}{2\,k-1}f\left(x\right)^{2\,k-1}\right)'
\\
&=1-\sum_{k=1}^{P}\left(-1\right)^{k-1}f\left(x\right)^{2\,k-2}f\,'\left(x\right)\,.
\\
&=1+\sum_{k=1}^{P}\left(-1\right)^{k}f\left(x\right)^{2\,k-2}f\,'\left(x\right)\,.
\end{align*}
We need the derivative of
\[
f\left(x\right)=\frac{\sin\!\left(x\right)-t\cos\!\left(x\right)}{\cos\!\left(x\right)+t\sin\!\left(x\right)}\,.
\]
We leave this task to the computer algebra system Maple and trust in the result:
\[
f\,'\left(x\right)=1+\left(\frac
{\sin\!\left(x\right)-t\cos\!\left(x\right)}
{\cos\!\left(x\right)+t\sin\!\left(x\right)}
\right)^2=1+f\left(x\right)^2.
\]
Substituting this into the first derivative, we get
\begin{align*}
T\,'\left(x\right)
&=1+\sum_{k=1}^{P}\left(-1\right)^{k}f\left(x\right)^{2\,k-2}\left(1+f\left(x\right)^2\right)
\\
&=1+\sum_{k=1}^{P}\left(-1\right)^{k}f\left(x\right)^{2\,k-2}
+\sum_{k=1}^{P}\left(-1\right)^{k}f\left(x\right)^{2\,k}.
\end{align*}
To make the matter more transparent, we will refer to the first sum as
\begin{align*}
s_1=1+\sum_{k=1}^{P}\left(-1\right)^{k}f\left(x\right)^{2\,k-2}
\end{align*}
and the second sum as
\[
s_2=\sum_{k=1}^{P}\left(-1\right)^{k}f\left(x\right)^{2\,k}
\]
The first derivative can then be written simply as
\[
T\,'\left(x\right)=s_1+s_2\,.
\]
Next step is splitting the sum $s_1$ as follows:
\begin{align*}
s_1
&=1+\sum_{k=1}^{1}\left(-1\right)^{k}f\left(x\right)^{2\,k-2}
+\sum_{k=2}^{P}\left(-1\right)^{k}f\left(x\right)^{2\,k-2}
\\
&=1-1
+\sum_{k=2}^{P}\left(-1\right)^{k}f\left(x\right)^{2\,k-2}
\\
&=
\sum_{k=2}^{P}\left(-1\right)^{k}f\left(x\right)^{2\,k-2}.
\end{align*}
Now shifting the summation index leads to
\begin{align*}
s_1
&=\sum_{k=1}^{P-1}\left(-1\right)^{k+1}f\left(x\right)^{2\,k}
\\
=&-\sum_{k=1}^{P-1}\left(-1\right)^{k}f\left(x\right)^{2\,k}\,.
\end{align*}
Next, let's look at $s_2$. Also splitting the sum and simplifying gives a more simplified form:
\begin{align*}
s_2&=\sum_{k=1}^{P-1}\left(-1\right)^{k}f\left(x\right)^{2\,k}
+\sum_{k=P}^{P}\left(-1\right)^{k}f\left(x\right)^{2\,k}
\\
&=\sum_{k=1}^{P-1}\left(-1\right)^{k}f\left(x\right)^{2\,k}
+\left(-1\right)^{P}f\left(x\right)^{2\,P}.
\end{align*}
Up to now we have
\begin{align*}
s_1
&=-\sum_{k=1}^{P-1}\left(-1\right)^{k}f\left(x\right)^{2\,k}
\\
s_2
&=\;\;\;\sum_{k=1}^{P-1}\left(-1\right)^{k}\bigl(f\left(x\right)\bigr)^{2\,k}
+\left(-1\right)^{P}f\left(x\right)^{2\,P}\,.
\end{align*}
When used, we get
\begin{align*}
T\,'\left(x\right)
=&\;\;\;s_1+s_2
\\
=&-\sum_{k=1}^{P-1}\left(-1\right)^{k}f\left(x\right)^{2\,k}
\\
&+\sum_{k=1}^{P-1}\left(-1\right)^{k}f\left(x\right)^{2\,k}
+\left(-1\right)^{P}f\left(x\right)^{2\,P}
\\
&=\left(-1\right)^{P}f\left(x\right)^{2\,P}\,.
\end{align*}
The sums are vanishing and it remains the simple term:
\[T\,'\left(x\right)
=\left(-1\right)^{P}f\left(x\right)^{2\,P}
=\left(-1\right)^{P}\left(\frac
{\sin\!\left(x\right)-t\cos\!\left(x\right)}
{\cos\!\left(x\right)+t\sin\!\left(x\right)}
\right)^{2\,P}\,.
\]
Now plugging in the fixed point $\arctan\left(t\right)$ gives us
\[
T\,'\bigl(\arctan\left(t\right)\bigr)=\left(-1\right)^{P}\left(\frac
{\sin\bigl(\arctan\left(t\right)\bigr)-t\cos\bigl(\arctan\left(t\right)\bigr)}
{\cos\bigl(\arctan\left(t\right)\bigr)+t\sin\bigl(\arctan\left(t\right)\bigr)}
\right)^2\,.
\]
We have already calculated the term in brackets in the fixed point proof:
\[\frac
{\sin\bigl(\arctan\left(t\right)\bigr)-t\cos\bigl(\arctan\left(t\right)\bigr)}
{\cos\bigl(\arctan\left(t\right)\bigr)+t\sin\bigl(\arctan\left(t\right)\bigr)}=0\,.\]
Plugging in, finally results in
\[T\,'\bigl(\arctan\left(t\right)\bigr)=0\,.\]
Up to now, we have the particular results at the fixed point
\[T\,\bigl(\arctan\left(t\right)\bigr)=\arctan\left(t\right)\,;\]
\[T\,'\bigl(\arctan\left(t\right)\bigr)=0\,.\]
\subsubsection{Higher derivatives}
We still have to prove that the following statement holds at the fixed point $\arctan\left(t\right):$
\begin{align*}
T^{\left(k\right)}{\bigl(\arctan\left(t\right)\bigr)}=\left\{
\begin{aligned}
0\;\quad\qquad\qquad\;\; 1\leq &\,k\le 2\,P
\\
\\
\left(-1\right)^{P}\left(2\,P\right)\,!
\qquad\quad k=&2\,P+1
\\ 
\end{aligned} 
\right.
\end{align*}
We must differentiate $\left(2\,P+1\right)$ times $T\left(x\right)$ and then plugging in $\arctan\left(t\right)$. Using the first derivative $T\,'\left(x\right)$, the task simplifies to 
\begin{align*}
\left.T^{\left(2\,P+1\right)}\left(x\right)\right|_{x=\arctan\left(t\right)}
=\left.\left(T\,'\right)^{\left(2\,P\right)}\left(x\right)\right|_{x=\arctan\left(t\right)}
=\left.\left(\left(-1\right)^{P}f\left(x\right)^{2\,P}\right)^{\left(2\,P\right)}\right|_{x=\arctan\left(t\right)}\,.
\end{align*}
On the right-hand side, we need at $\arctan\left(t\right)$ the $\left(2\,P\right)^{th}$ derivative of $f\left(x\right)^{2\,P}$. Remembering
\[f\bigl(\arctan\left(t\right)\bigr)=0\]
and calculating
\[f'\bigl(\arctan\left(t\right)\bigr)=1+f\left(\arctan\left(t\right)\right)^2=1\,.\]
Then, we proceed as follows: We expand $f\left(x\right)$ into a Taylor series of degree 1 at the fixed point $\arctan\left(t\right)$ with Lagrange remainder:
\[
f\left(x\right)
=f\bigl(\arctan\left(t\right)\bigr)
+f'\bigl(\arctan\left(t\right)\bigr)\bigl(x-\arctan\left(t\right)\bigr)
+\frac{f\,''\left(\xi\right)}{2}\bigl(x-\arctan\left(t\right)\bigr)^{2}\,.
\]
where $\xi$ is between $x$ and $\arctan\left(t\right)$.
$\\ $
Since, in accordance with $f\bigl(\arctan\left(t\right)\bigr)=0$ and $f'\bigl(\arctan\left(t\right)\bigr)=1$, we obtain
\[
f\left(x\right)
=\bigl(x-\arctan\left(t\right)\bigr)+\frac{f\,''\left(\xi\right)}{2}\bigl(x-\arctan\left(t\right)\bigr)^{2}\,.\]
Factoring out $\bigl(x-\arctan\left(t\right)\bigr)$ yields
\[
f\left(x\right)
=\bigl(x-\arctan\left(t\right)\bigr)\left(1+\frac{f\,''\left(\xi\right)}{2}\bigl(x-\arctan\left(t\right)\bigr)\right)\,.
\]
Next raising the equation to the $\left(2\,P\right)^{\,th}$ power:
\[
f\left(x\right)^{2\,P}
=\bigl(x-\arctan\left(t\right)\bigr)^{2\,P}
\left(1+\frac{f{\,''}\left(\xi\right)}{2}\bigl(x-\arctan\left(t\right)\bigr)\right)^{2\,P}\,.
\]
We calculate the higher derivatives of $f\left(x\right)^{2\,P}$ by using the Leibniz rule \cite{SO}.
Leibniz rule states that if $u\left(x\right)$ and $v\left(x\right)$ are $n$-times differentiable functions, then the product $u\left(x\right)\cdot v\left(x\right)$ is also $n$-times differentiable and its $n^{\,th}$ derivative is given by the sum
\[
\bigl(u\left(x\right)\cdot v\left(x\right)\bigr)^{\left(n\right)}
=\sum\limits_{k=0}^{n}\binom{n}{k}\,u\left(x\right)^{\left(k\right)}\cdot v\left(x\right)^{\left(n-k\right)}\,.
\]
Here we want to understand $u\left(x\right)^{\left(0\right)}=u\left(x\right)$ and $v\left(x\right)^{\left(0\right)}=v\left(x\right)\,.$
$\\ $
$\\ $
In our case we have:
\[
n=2\,P\;;
\]
\[
u\left(x\right)=\bigl(x-\arctan\left(t\right)\bigr)^{2\,P}\;;
\]
\[
v\left(x\right)=\left(1+\frac{f\,''\left(\xi\right)}{2}\bigl(x-\arctan\left(t\right)\bigr)\right)^{2\,P}\;;
\]
Thus we obtain
\[
f\left(x\right)^{2\,P}=
\underbrace{{\bigl(x-\arctan\left(t\right)\bigr)}^{2\,P}}_{u\left(x\right)}
\cdot
\underbrace{{{\left(1+\frac{f\,''{\left(\xi\right)}}{2}\bigl(x-\arctan\left(t\right)\bigr)\right)}^{2\,P}}}_{v\left(x\right)}\,.
\]
Now applying Leibniz rule gives
\begin{align*}
\left(f\left(x\right)^{2\,P}\right)^{\left({2\,P}\right)}
&=\sum\limits_{k=0}^{{2\,P}}\binom{{2\,P}}{k}
\,\left(\bigl(x-\arctan\left(t\right)\bigr)^{2\,P}\right)^{\left(k\right)}
\\
&\times
\left(\left(1+\frac{f\,''\left(\xi\right)}{2}\bigl(x-\arctan\left(t\right)\bigr)\right)^{{2\,P}}\right)^{\left({2\,P-k}\right)}
\end{align*}
We first focus on the sequence of derivatives
\[
u\left(x\right)^{\left(k\right)}
=\left(\bigl(x-\arctan\left(t\right)\bigr)^{{2\,P}}\right)^{\left\{k\right\}},k=0,1,\ldots,{2\,P}\,.
\]
From a mathematical handbook \cite{SO}, we derive the following formula:
\begin{align*}
\Bigl(\bigl(x-\arctan\left(t\right)\bigr)^{2\,P}\Bigr)^{\left(k\right)}
=\frac{\left(2\,P\right)\,!}{\left(2\,P-k\right)\,!}\bigl(x-\arctan\left(t\right)\bigr)^{2\,P-k}\,,k=0,1,\ldots,{2\,P}.
\end{align*}
$\\ $
In particular, we note that for $k<2\,P$ in
\[
u\left(x\right)^{\left(k\right)}=
\left(\bigl(x-\arctan\left(t\right)\bigr)^{{2\,P}}\right)^{\left(k\right)}
\]
the factor
$\bigl(x-\arctan\left(t\right)\bigr)$ always occurs and thus for $x=\arctan\left(t\right)$ the terms are vanishing. Only the case $k=\left(2\,P\right)$ leads to a non-zero result, namely the product
\[
{\left.u{{\left(x\right)}^{\left(2\,P\right)}}\right|}_{x=\arctan\left(t\right)}=\left(2\,P\right)\,!\,.
\]
From the sum
\[
\sum\limits_{k=0}^{{2\,P}}\binom{{2\,P}}{k}
\,\left(\bigl(x-\arctan\left(t\right)\bigr)^{2\,P}\right)^{\left(k\right)}
\cdot
\left(\left(1+\frac{f\,''{\left(\xi\right)}}{2}\bigl(x-\arctan\left(t\right)\bigr)\right)^{{2\,P}}\right)^{\left({2\,P-k}\right)},
\]
it only rests the term with $k=\left(2\,P\right)$ at the fixed point $\arctan\left(t\right)$. We can evaluate
\begin{align*}
{\left.{{\left(f{{\left(x\right)}^{2\,P}}\right)}^{\left(2\,P\right)}}\right|}_{x=\arctan\left(t\right)}
&=\underbrace{\sum\limits_{k={2\,P}}^{{2\,P}}\binom{{2\,P}}{{2\,P}}\left.\left(\bigl(x-\arctan\left(t\right)\bigr)^{2\,P}\right)^{\left(2\,P\right)}\right|_{x=\arctan\left(t\right)}}_{=\left(2\,P\right)\,!}
\\
&\times\underbrace{\left.\left(\left(1
+\frac{f\,''\left(\xi\right)}{2}\bigl(x-\arctan\left(t\right)\bigr)\right)^{2\,P}\right)\right|_{x=\arctan\left(t\right)}}_{=1}\,.
\end{align*}
We obtain the simple result
\[
{\left.{{\left(f{{\left(x\right)}^{2\,P}}\right)}^{\left(2\,P\right)}}\right|}_{x=\arctan\left(t\right)}
=\left(2\,P\right)\,!\,.
\]
Our task was the calculation of
\begin{align*}
\left.T\left(x\right)^{\left(2\,P+1\right)}\right|_{x=\arctan\left(t\right)}
=\left.T\,'\left(x\right)^{\left(2\,P\right)}\right|_{x=\arctan\left(t\right)}
=\left.\left(\left(-1\right)^{P}f\left(x\right)^{2\,P}\right)^{\left(2\,P\right)}\right|_{x=\arctan\left(t\right)}\,.
\end{align*}
Now plugging in yields
\[
T\bigl(\arctan\left(t\right)\bigr)^{\left(2\,P+1\right)}
=\left(-1\right)^P\left(2\,P\right)\,!
\]
This completes the proof of the derivations.
\subsection{Proof of convergence}
We do the prove by using Banach's fixed point theorem \cite {WG} in a special form for continuously differentiable functions in ${\mathbb{R}}^{1}\,.$
$ \\ $
$ \\ $
\underline{Banach fixed point theorem}
\\
Let $U\subseteq R$ be a closed subset; furthermore, let $F:U\rightarrow R$ be a mapping with the following properties\[
F\left(U\right)\subseteq U \,.
\]
There exists $0<L<1$ such that
\[
\left|F\,'\left(x\right)\right| \leq L\,,\; \text{for\,\;all} \;x\in U \,.
\]
Then the following statements hold:
\begin{itemize}
\item[(a)]
There is exactly one fixed point ${{x}^{*}}\in U$ of $F\,.$
\item[(b)]
For every initial value $x_0\in U$, the sequence
$x_{n+1}=F\left(x_{{n}}\right)$
converges to $x^*\,.$
\end{itemize}
$\\ $
\underline{Proof of convergence} 
$\\ $
We have already proven that $T\bigl(\arctan\left(t\right)\bigr)=\arctan\left(t\right)$ and $T\,'\bigl(\arctan\left(t\right)\bigr)=0$ hold true. Then, for reasons of continuity, there exists a neighbourhood at the fixed point $\arctan\left(t\right)$
\[U_{\delta}=\left\{x\in\mathbb{R}:\left|x-\arctan\left(t\right)\right|\leq\delta\right\}\]
in which the following applies: $T\left(U_\delta\right)\subseteq U_\delta$ and $\left|T\,'\left(x\right)\right|<L<1$ for all $x\in U_{\delta}$.
Thus, the conditions for the fixed point theorem are satisfied and convergence follows.
\subsection{Order of convergence}
Finally, we have to proof that the order of convergence is exactly $\left(2\,P+1\right)$.
We expand $T\left(x\right)$ into a Taylor series around the fixed point $\arctan\left(t\right)\,:$
\begin{align*}
T\left(x\right)&=T\bigl(\arctan\left(t\right)\bigr)
\\&
+T\,'\bigl(\arctan\left(t\right)\bigr)\bigl(x-\arctan\left(t\right)\bigr)
+\ldots+T^{\left(2\,P\right)}\bigl(\arctan\left(t\right)\bigr)
\frac{\bigl(x-\arctan\left(t\right)\bigr)^{2\,P}}{\left(2\,P\right)\,!}
\\
&+T^{\left(2\,P+1\right)}\left(\xi\right)
\frac{\bigl(x-\arctan\left(t\right)\bigr)^{2\,P+1}}{\left(2\,P+1\right)\,!}\;,
\end{align*}
where $\xi$ is between $x$ and $\arctan\left(t\right)$. Let us remind $T\bigl(\arctan\left(t\right)\bigr)=\arctan\left(t\right)$ and the differentiation results
\begin{align*}
T^{\left(k\right)}{\bigl(\arctan\left(t\right)\bigr)}=\left\{
\begin{aligned}
0\quad\qquad\qquad\qquad\quad 0\le &k\leq 2\, P
\\
\\
\left(-1\right)^{P}\left(2\cdot P\right)\,!
\qquad\qquad\quad k=&2\, P+1
\\ 
\end{aligned} 
\right.
\end{align*}
The Taylor series thus simplifies to
\begin{align*}
T\left(x\right)
&=\underbrace{T\bigl(\arctan\left(t\right)\bigr)}_{=\arctan\left(t\right)}
\\
&+\underbrace{T\,'\bigl(\arctan\left(t\right)\bigr)\bigl(x-\arctan\left(t\right)\bigr)
+\ldots
+T^{\left(2\,P\right)}\bigl(\arctan\left(t\right)\bigr)
\frac{\bigl(x-\arctan\left(t\right)\bigr)^{2\,P}}{\left(2\,P\right)\,!}}_{=0}
\\
&+T^{\left(2\,P+1\right)}\left(\xi\right)
\frac{\bigl(x-\arctan\left(t\right)\bigr)^{2\,P+1}}{\left(2\,P+1\right)\,!}
\\
&=\arctan\left(t\right)+T\left(\xi\right)^{\left(2\,P+1\right)}
\frac{\bigl(x-\arctan\left(t\right)\bigr)^{2\,P+1}}{\left(2\,P+1\right)\,!}\,.
\end{align*}
Now plugging in $x=x_n$ sufficiently close to $\arctan\left(t\right)$, we have
\begin{align*}
T\left(x_{n}\right)
=\arctan\left(t\right)
+T^{\left(2\,P+1\right)}\left(\xi_{n}\right)
\frac{\left(x_{n}+\arctan\left(t\right)\right)^{2\,P+1}}{\left(2\,P+1\right)\,!}\,.
\end{align*}
Since $T\left(x_n\right)=x_{n+1}$, we obtain
\begin{align*}
x_{n+1}=\arctan\left(t\right)
+T\,^{\left(2\,P+1\right)}\left(\xi_{n}\right)
\frac{\left(x_{n}-\arctan\left(t\right)\right)^{2\,P+1}}{\left(2\,P+1\right)\,!}\,.
\end{align*}
Taking $\arctan\left(t\right)$ to the left side and dividing by $\left(x_{n+1}-\arctan\left(t\right)\right)^{2\,P+1}$ yields
\[
\frac{x_{n+1}-\arctan\left(t\right)}{\left(x_{n}-\arctan\left(t\right)\right)^{2\,P+1}}=
T^{\left(2\,P+1\right)}\left(\xi_{n}\right)
\frac{1}{\left(2\,P+1\right)\,!}\,.
\]
As $x_n\rightarrow \arctan\left(t\right)$, since $ \xi_n$ is trapped between $x_n$ and $\arctan\left(t\right)$ we conclude, by the continuity of $T^{\,\left(P+1\right)}\left(x\right)$ at $\arctan\left(t\right)$, that
\begin{align*}
\underset{n\to \infty}{\mathop{\lim }}\; \frac{x_{n+1}-\arctan\left(t\right)}{\left(x_{n}-\arctan\left(t\right)\right)^{2\,P+1}}
=\underset{{n}\to \infty}{\mathop{\lim }}\;
T^{\left(2\,P+1\right)}\left(\xi_{n}\right)
\frac{1}{\left(2\,P+1\right)\,!}
=T^{\left(2\,P+1\right)}\bigl(\arctan\left(t\right)\bigr)
\frac{1}{\left(2\,P+1\right)\,!}\,.
\end{align*}
We use an earlier result
\[
T^{\left({2\,P+1}\right)}{\bigl(\arctan\left(t\right)\bigr)}
=\left(-1\right)^{P}\left(2\,P\right)\,!\,.
\]
Plugging in gives
\begin{align*}
\underset{n\to\infty}{\mathop{\lim }}\;\;
\frac{x_{n+1}-\arctan\left(t\right)}{\left(x_{n}-\arctan\left(t\right)\right)^{2\,P+1}}
=\left(-1\right)^{P}\frac{\left(2\,P\right)\,!}{\left(2\,P+1\right)\,!}
=\left(-1\right)^{P}\frac{\left(2\,P\right)\,!}{\left(2\,P\right)\,!\left(2\,P+1\right)}
=\frac{\left(-1\right)^{P}}{2\,P+1}\,.
\end{align*}
This shows that convergence is exactly of the order $\left(2\,P+1\right)$, and
\[
\frac{\left(-1\right)^{P}}{2\,P+1}
\]
is the asymptotic error constant.
\section{\texorpdfstring{Practical computation}{}}
Now let’s move on to the application of the fixed point function. Let $t\in\mathbb{R}^{+}$ and $P\in\mathbb{N}$. That gives the fixed point function
\[T\left(x\right)=x-\sum_{k=1}^{P}
\frac{\left(-1\right)^{k-1}}{2\,k-1}
\left(\frac
{\sin\!\left(x\right)-t\cos\!\left(x\right)}
{\cos\!\left(x\right)+t\sin\!\left(x\right)}
\right)^{2\,k-1}
\]
With an initial value $x_0$ sufficiently close to $\arctan\left(t\right)$, we start the iteration:
\[x_{1}=x_{0}-\sum\limits_{k=1}^{P}
\frac{\left(-1\right)^{k-1}}{2\,k-1}
\left(\frac
{\sin\!\left(x_0\right)-t\cos\!\left(x_0\right)}
{\cos\!\left(x_0\right)+t\sin\!\left(x_0\right)}
\right)^{2\,k-1}\,;
\]

\[x_{2}=x_{1}-\sum\limits_{k=1}^{P}
\frac{\left(-1\right)^{k-1}}{2\,k-1}
\left(\frac
{\sin\!\left(x_1\right)-t\cos\!\left(x_1\right)}
{\cos\!\left(x_1\right)+t\sin\!\left(x_1\right)}
\right)^{2\,k-1}\,;
\]

\[x_{3}=x_{2}-\sum\limits_{k=1}^{P}
\frac{\left(-1\right)^{k-1}}{2\,k-1}
\left(\frac
{\sin\!\left(x_2\right)-t\cos\!\left(x_2\right)}
{\cos\!\left(x_2\right)+t\sin\!\left(x_2\right)}
\right)^{2\,k-1}\,;
\]

\[\vdots\]

\[x_{n+1}=x_{n}-\sum\limits_{k=1}^{P}
\frac{\left(-1\right)^{k-1}}{2\,k-1}
\left(\frac
{\sin\!\left(x_n\right)-t\cos\!\left(x_n\right)}
{\cos\!\left(x_n\right)+t\sin\!\left(x_n\right)}
\right)^{2\,k-1}\,.
\]
$ \\ $
If, now $n\rightarrow \infty$, then $x_{n+1}\rightarrow \arctan\left(t\right)$ with order of convergence $\left(2\,P+1\right)$. With unlimited computing time and unlimited storage space, we can calculate $\arctan\left(t\right)$  with unlimited precision. Unfortunately, we do not have anything like that available. We must be content with a finite number of iterations and a finite value of $\arctan\left(t\right)$.
$ \\ $
$ \\ $
We require a termination criterion. For this purpose, let $\epsilon>0\,.$ We terminate the iteration if the following statement holds
\[\left|x_{n+1}-x_{n}\right|<\epsilon\,.\]
\subsection{\texorpdfstring{Machin-Like Formulas for $\frac{\pi}{4}$}{}}
Now, we turn to Machin-like formulas. Machin’s formulas are particular identities for $\frac{\pi}{4}$ as a sum of integer multiples of arctangent having rational arguments.
\[\frac{\pi}{4}=\sum_{k=1}^{n}a_n\arctan\frac{1}{b_n}\;,\;\;a_n\in\mathbb Z\,;\;b_n\in\mathbb N\]
Here, the argument takes the value $t=\frac{1}{q}$, where $q\in \mathbb{N}.$ This allows us to obtain a further representation. The fixed-point function becomes
\begin{align*}
T\left(x\right)
&=x-\sum_{k=1}^{P}\,\frac{\left(-1\right)^{k-1}}{2\,k-1}\left(\frac
{\sin\left(x\right)-\frac{1}{q}\cdot\cos\left(x\right)}
{\cos\left(x\right)+\frac{1}{q}\cdot\sin\left(x\right)}
\right)^{2\,k-1}
\\
&=x-\sum_{k=1}^{P}\,\frac{\left(-1\right)^{k-1}}{2\,k-1}\,\left(\frac
{q\cdot\sin\left(x\right)-\cos\left(x\right)}
{q\cdot\cos\left(x\right)+\sin\left(x\right)}\right)^{2\,k-1};
\end{align*}
From this, we derive the iteration formula:
\[
x_{n+1}=x_{n}-\sum_{k=1}^{P}\,\frac{\left(-1\right)^{k-1}}{2\,k-1}\,\left(\frac
{q\cdot\sin\left(x_{n}\right)-\cos\left(x_{n}\right)}
{q\cdot\cos\left(x_{n}\right)+\sin\left(x_{n}\right)}\right)^{2\,k-1};\;\;n=0,1,\ldots
\]
\subsubsection{1-Term formula}
There is a single formula with one term
\[
\frac{\pi}{4}=\arctan\left(1\right)
\]
So $q=1$, we obtain
\[
T\left(x\right)=x-\sum_{k=1}^{P}\,\frac{\left(-1\right)^{k-1}}{2\,k-1}\,\left(\frac
{\sin\left(x\right)-\cos\left(x\right)}
{\sin\left(x\right)+\cos\left(x\right)}\right)^{2\,k-1}\;.
\]
and the iteration formula
\[
x_{n+1}=x_n-\sum_{k=1}^{P}\,\frac{\left(-1\right)^{k-1}}{2\,k-1}\,\left(\frac
{\sin\left(x_n\right)-\cos\left(x_n\right)}
{\sin\left(x_n\right)+\cos\left(x_n\right)}\right)^{2\,k-1};\;\;n=0,1,\ldots
\]
\subsubsection{2-Term formulas}
According to Störmer [4], there are exactly four formulas with two terms:
\begin{align*}
&\frac{\pi}{4}=\arctan\frac{1}{2}+\arctan\frac{1}{3}
\\
&\frac{\pi}{4}=2\arctan\frac{1}{2}-\arctan\frac{1}{7}
\\
&\frac{\pi}{4}=2\arctan\frac{1}{3}+\arctan\frac{1}{7}
\\
&\frac{\pi}{4}=4\arctan\frac{1}{5}-\arctan\frac{1}{239}
\end{align*}
$\\ $
$\\ $
We start with
\[\frac{\pi}{4}
=\arctan\left(\frac{1}{2}\right)+\arctan\left(\frac{1}{3}\right)
\]
For $\arctan\left(\frac{1}{2}\right)$, that is, $q=2$, we get:
\[
T\left(x\right)=x-\sum_{k=1}^{P}\,\frac{\left(-1\right)^{k-1}}{2\,k-1}\,\left(\frac
{2\cdot\sin\left(x\right)-\cos\left(x\right)}
{2\cdot\cos\left(x\right)+\sin\left(x\right)}
\right)^{2\,k-1}
\]
and
\[
x_{n+1}=x_{n}-\sum_{k=1}^{P}\,\frac{\left(-1\right)^{k-1}}{2\,k-1}\,\left(\frac
{2\cdot\sin\left(x_{n}\right)-\cos\left(x_{n}\right)}
{2\cdot\cos\left(x_{n}\right)+\sin\left(x_{n-}\right)}
\right)^{2\,k-1};\;\;n=0,1,\ldots
\]
Evaluating $\arctan\left(\frac{1}{3}\right)$ yields
\[
T\left(x\right)=x-\sum_{k=1}^{P}\,\frac{\left(-1\right)^{k-1}}{2\,k-1}\,\left(\frac
{3\cdot\sin\left(x\right)-\cos\left(x\right)}
{3\cdot\cos\left(x\right)+\sin\left(x\right)}
\right)^{2\,k-1}
\]
\[
x_{n+1}=x_{n}-\sum_{k=1}^{P}\,\frac{\left(-1\right)^{k-1}}{2\,k-1}\,\left(\frac
{3\cdot\sin\left(x_{n}\right)-\cos\left(x_{n}\right)}
{3\cdot\cos\left(x_{n}\right)+\sin\left(x_{n}\right)}
\right)^{2\,k-1};\;\;n=0,1,\ldots
\]
$ \\ $
The next formulas are
\[\frac{\pi}{4}=2\arctan\left(\frac{1}{2}\right)-\arctan\left(\frac{1}{7}\right)\]
\[\frac{\pi}{4}=2\arctan\left(\frac{1}{3}\right)+\arctan\left(\frac{1}{7}\right)\]
We already have the fixed-point function and the iterative formula for $\arctan\left(\frac{1}{2}\right)$ and $\arctan\left(\frac{1}{3}\right)$, so we only need them for $\arctan\left(\frac{1}{7}\right)$. This gives
\[
T\left(x\right)=x-\sum_{k=1}^{P}\,\frac{\left(-1\right)^{k-1}}{2\,k-1}\,\left(\frac
{7\cdot\sin\left(x\right)-\cos\left(x\right)}
{7\cdot\cos\left(x\right)+\sin\left(x\right)}
\right)^{2\,k-1}
\]
\[
x_{n+1}=x_{n}-\sum_{k=1}^{P}\,\frac{\left(-1\right)^{k-1}}{2\,k-1}\,\left(\frac
{7\cdot\sin\left(x_{n}\right)-\cos\left(x_{n}\right)}
{7\cdot\cos\left(x_{n}\right)+\sin\left(x_{n}\right)}
\right)^{2\,k-1};\;\;n=0,1,\ldots
\]
$ \\ $
Now we come to Machin’s famous formula
\[\frac{\pi}{4}
=4\arctan\left(\frac{1}{5}\right)-\arctan\left(\frac{1}{239}\right)
\]
For $\arctan\left(\frac{1}{5}\right)$, we get
\[
T\left(x\right)=x-\sum_{k=1}^{P}\,\frac{\left(-1\right)^{k-1}}{2\,k-1}\,\left(\frac
{5\cdot\sin\left(x\right)-\cos\left(x\right)}
{5\cdot\cos\left(x\right)+\sin\left(x\right)}
\right)^{2\,k-1}
\]
\[
x_{n+1}=x_{n}-\sum_{k=1}^{P}\,\frac{\left(-1\right)^{k-1}}{2\,k-1}\,\left(\frac
{5\cdot\sin\left(x_{n}\right)-\cos\left(x_{n}\right)}
{5\cdot\cos\left(x_{n}\right)+\sin\left(x_{n}\right)}
\right)^{2\,k-1};\;\;n=0,1,\ldots
\]
Finally $\arctan\left(\frac{1}{239}\right)$ gives
\[
T\left(x\right)=x-\sum_{k=1}^{P}\,\frac{\left(-1\right)^{k-1}}{2\,k-1}\,\left(\frac
{239\cdot\sin\left(x\right)-\cos\left(x\right)}
{239\cdot\cos\left(x\right)+\sin\left(x\right)}
\right)^{2\,k-1}
\]
\[
x_{n+1}=x_{n}-\sum_{k=1}^{P}\,\frac{\left(-1\right)^{k-1}}{2\,k-1}\,\left(\frac
{239\cdot\sin\left(x_{n}\right)-\cos\left(x_{n}\right)}
{239\cdot\cos\left(x_{n}\right)+\sin\left(x_{n}\right)}
\right)^{2\,k-1};\;\;n=0,1,\ldots
\]
More Machin-like formulas with three and more terms are detailed described in \cite{JA}.
\subsection{\texorpdfstring{Computing one million digits of\;$\frac{\pi}{4}$}{}}
We want to compute one million digits of $\frac{\pi}{4}$. That means $\epsilon=10^{-1000000}$.  We use the formula for one term: $\frac{\pi}{4}=\arctan\left(1\right)$. In order to use the fixed point function efficiently for high-precision computation, a fast evaluation of $\sin\left(x\right)$ and $\cos\left(x\right)$ is required. There are many interesting publications on this topic \cite{LT},\cite{RB},\cite{HG},\cite{RBA},\cite{NB}. From the large number of proposed methods, we have selected the following three:
$\\ $
$\\ $
\underline{\bf{1.\;Selecting an initial value very close to $\frac{\pi}{4}$}}
$\\ $
We use as starting value
\[x_0=0.7853981633975\]
which is already accurate to 14 digits.
$ \\ $
$ \\ $
\underline{\bf{2.\;Increase in order of convergence}}
$\\ $
To ensure that the order of convergence takes full effect, we choose $P=2$. Next, we determine the fixed point function 
\[T\left(x\right)=x
-\frac{\sin\left(x\right)-\cos\left(x\right)}{\sin\left(x\right)+\cos\left(x\right)}
+\frac{1}{3}\left(\frac{\sin\left(x\right)-\cos\left(x\right)}{\sin\left(x\right)+\cos\left(x\right)}\right)^3\]
and the fixed point iteration:
\[x_{n+1}
=x_{n}-\frac
{\sin\left(x_{n}\right)-\cos\left(x_{n}\right)}
{\sin\left(x_{n}\right)+\cos\left(x_{n}\right)}
+\frac{1}{3}\left(
\frac
{\sin\left(x_{n}\right)-\cos\left(x_{n}\right)}
{\sin\left(x_{n}\right)+\cos\left(x_{n}\right)}\right)^{3}\,,
\]
which we use in the form
\[x_{n}
=x_{n-1}-\frac
{\sin\left(x_{n-1}\right)-\cos\left(x_{n-1}\right)}
{\sin\left(x_{n-1}\right)+\cos\left(x_{n-1}\right)}
+\frac{1}{3}\left(
\frac
{\sin\left(x_{n-1}\right)-\cos\left(x_{n-1}\right)}
{\sin\left(x_{n-1}\right)+\cos\left(x_{n-1}\right)}\right)^{3}\;,\;\;n=1,2,\ldots
\]
The index $n$ then runs in step with the number of iterations. $P=2$ gives order of convergence $2\cdot 2+1=5$. Then $x_n$ converges quintically to $\frac{\pi}{4}$, that is, each iteration approximately multiplies the number of correct digits by five. Thus we need eight iterations to obtain at least one million digits of $\frac{\pi}{4}$. 
$ \\ $
$ \\ $
\underline{\bf{3.\;Increase in the number of Digits}}
$\\ $
The fixed point iteration (like any other) is self-correcting. Each iteration must not be performed with the desired number of correct digits of $\frac{\pi}{4}$'s final result.
$\\ $
$\\ $
For the calculations, we use the computer algebra systems Maple. The \textbf{Digits} environment variable controls the number of digits that Maple uses when making calculations with software floating-point numbers. In step 1 we then need an accuracy of $14\cdot 5=70$ \textbf{Digits}. In step 2 the needed accuracy is $14\cdot 5^2=350$ \textbf{Digits} and so on until step 7 with accuracy of $14\cdot 5^7=1093750$ \textbf{Digits}. We have now reached the desired accuracy of at least one million digits. Therefore, in the final step 8, we no longer need to multiply the number of \textbf{Digits} by 5.
$ \\ $
Before starting the computation we take a look at the involved functions:
\[y=x\;;\quad f\left(x\right)=\frac{\sin\left(x\right)-\cos\left(x\right)}{\sin\left(x\right)+\cos\left(x\right)}\;;\quad
T\left(x\right)=x
-\frac{\sin\left(x\right)-\cos\left(x\right)}{\sin\left(x\right)+\cos\left(x\right)}
+\frac{1}{3}\left(\frac{\sin\left(x\right)-\cos\left(x\right)}{\sin\left(x\right)+\cos\left(x\right)}\right)^3\,.\]
\begin{center}
\fbox{\includegraphics[width=0.8\linewidth]{"fp_pi4_P2"}}
\centerline{\textbf{\underline{figure 1}}}
\end{center}
\centerline{ }
In the following $\frac{\pi}{4}$ calculation, we have implemented the three optimisation methods described above. In addition, we output the number of \textbf{Digits} with which Maple calculates at each step. We start with the initial value
$x_0=0.7853981633975$ and compute eight steps
\[\underline{step\;\;1}\]
\[Digits=14\cdot 5=70\]
\[x_{{1}}=x_0-{\frac
{\sin\left(x_{{0}}\right)-\cos\left(x_{{0}}\right)}
{\sin\left(x_{{0}}\right)+\cos\left(x_{{0}}\right)}}
+\frac{1}{3}\left(
{\frac{\sin\left(x_{{0}}\right)-\cos\left(x_{{0}}\right)}
{\sin\left(x_{{0}}\right)+\cos\left(x_{{0}}\right)}}\right)^{3}\]
\[x_{{1}}=0.7853981633974483096156608458198757210493\]
\[\left|x_{{1}}-x_{{0}}\right|={5.169038433915418012427895070765015622354\times10^{-14}}\]

\[\underline{step\;\;2}\]
\[Digits=14\cdot 5^2=350\]
\[x_{{2}}=x_1-{\frac
{\sin\left(x_{{1}}\right)-\cos\left(x_{{1}}\right)}
{\sin\left(x_{{1}}\right)+\cos\left(x_{{1}}\right)}}
+\frac{1}{3}\left(
{\frac
{\sin\left(x_{{1}}\right)-\cos\left(x_{{1}}\right)}
{\sin\left(x_{{1}}\right)+\cos\left(x_{{1}}\right)}}\right)^{3}\]
\[x_{{2}}=0.7853981633974483096156608458198757210493\]
\[\left|x_{{2}}-x_{{1}}\right|={7.369842844775034299129366447073300446298\times10^{-68}}\]
\[\underline{step\;\;3}\]
\[Digits=14\cdot 5^3=1750\]
\[x_{{3}}=x_2-{\frac
{\sin\left(x_{{2}}\right)-\cos\left(x_{{2}}\right)}
{\sin\left(x_{{2}}\right)+\cos\left(x_{{2}}\right)}}
+\frac{1}{3}\left(
{\frac
{\sin\left(x_{{2}}\right)-\cos\left(x_{{2}}\right)}
{\sin\left(x_{{2}}\right)+\cos\left(x_{{2}}\right)}}\right)^{3}\]
\[x_{{3}}=0.7853981633974483096156608458198757210493\]
\[\left|x_{{3}}-x_{{2}}\right|={4.348316332835180268524099971673673627949\times10^{-337}}\]
\[\underline{step\;\;4}\]
\[Digits=14\cdot 5^4=8750\]
\[x_{{4}}=x_3-{\frac
{\sin\left(x_{{3}}\right)-\cos\left(x_{{3}}\right)}
{\sin\left(x_{{3}}\right)+\cos\left(x_{{3}}\right)}}
+\frac{1}{3}\left(
{\frac
{\sin\left(x_{{3}}\right)-\cos\left(x_{{3}}\right)}
{\sin\left(x_{{3}}\right)+\cos\left(x_{{3}}\right)}}\right)^{3}\]
\[x_{{4}}=0.7853981633974483096156608458198757210493\]
\[\left|x_{{4}}-x_{{3}}\right|={3.109106863680802672512752062886498044627\times10^{-1683}}\]
\[\underline{step\;\;5}\]
\[Digits=14\cdot 5^5=43750\]
\[x_{{5}}=x_4-{\frac
{\sin\left(x_{{4}}\right)-\cos\left(x_{{4}}\right)}
{\sin\left(x_{{4}}\right)+\cos\left(x_{{4}}\right)}}
+\frac{1}{3}\left(
{\frac
{\sin\left(x_{{4}}\right)-\cos\left(x_{{4}}\right)}
{\sin\left(x_{{4}}\right)+\cos\left(x_{{4}}\right)}}\right)^{3}\]
\[x_{{5}}=0.7853981633974483096156608458198757210493\]
\[\left|x_{{5}}-x_{{4}}\right|={5.810429595425326132376821459074554489163\times10^{-8414}}\]
\[\underline{step\;\;6}\]
\[Digits=14\cdot 5^6=218750\]
\[x_{{6}}=x_5-{\frac
{\sin\left(x_{{5}}\right)-\cos\left(x_{{5}}\right)}
{\sin\left(x_{{5}}\right)+\cos\left(x_{{5}}\right)}}
+\frac{1}{3}\left(
{\frac
{\sin\left(x_{{5}}\right)-\cos\left(x_{{5}}\right)}
{\sin\left(x_{{5}}\right)+\cos\left(x_{{5}}\right)}}\right)^{3}\]
\[x_{{6}}=0.7853981633974483096156608458198757210493\]
\[\left|x_{{6}}-x_{{5}}\right|={1.324558707074286945224127205648362976460\times10^{-42067}}\]
\[\underline{step\;\;7}\]
\[Digits=14\cdot 5^7=1093750\]
\[x_{{7}}=x_6-{\frac
{\sin\left(x_{{6}}\right)-\cos\left(x_{{6}}\right)}
{\sin\left(x_{{6}}\right)+\cos\left(x_{{6}}\right)}}
+\frac{1}{3}\left(
{\frac
{\sin\left(x_{{6}}\right)-\cos\left(x_{{6}}\right)}
{\sin\left(x_{{6}}\right)+\cos\left(x_{{6}}\right)}}\right)^{3}\]
\[x_{{7}}=0.7853981633974483096156608458198757210493\]
\[\left|x_{{7}}-x_{{6}}\right|={8.154288164686902937605883297903247653927\times10^{-210336}}\]
\[\underline{step\;\;8}\]
\[Digits=1093750\]
\[x_{{8}}=x_7-{\frac
{\sin\left(x_{{7}}\right)-\cos\left(x_{{7}}\right)}
{\sin\left(x_{{7}}\right)+\cos\left(x_{{7}}\right)}}
+\frac{1}{3}\left(
{\frac
{\sin\left(x_{{7}}\right)-\cos\left(x_{{7}}\right)}
{\sin\left(x_{{7}}\right)+\cos\left(x_{{7}}\right)}}\right)^{3}\]
\[x_{{8}}=0.7853981633974483096156608458198757210493\]
\[\left|x_{{8}}-x_{{7}}\right|={7.210415146280134406533351725774712443819\times10^{-1051677}}\]
$\\ $
With eight steps, we obtained well over a million digits of $\frac{\pi}{4}$. The negative exponents of the differences show the convergence order $5$. They approximately increase by a factor of 5.
$\\ $
$\\ $
We performed all computational results by using a home-made PC with the following hardware configuration: Motherboard ASUS PRIME A320M-K with CPU AMD Ryzen 5 5600G 6 CORE 3.90-4.40 GHz and 32 GB RAM. The used software was MAPLE 2025.2 by  Maplesoft, Waterloo Maple Inc. This software was also used to assist with formatting the mathematical terms into LaTeX.

\newpage
\selectlanguage{ngerman}
\begin{center}
{\LARGE\bf{Ein iteratives Verfahren\\}}
\vskip 0.25cm
{\LARGE\bf{zur Berechnung von arctan(x) mit}}
\vskip 0.25cm
{\LARGE\bf{beliebiger ungerader Konvergenzordnung }}
\[\]
\[\]
\[\]
\large
{Alois Schiessl}
\footnote[1]{{University of Regensburg}\,\;E-Mail: \texttt{aloisschiessl@posteo.de}}
\selectlanguage{ngerman}
\vskip 0.5cm
\centerline{\today}
\end{center}
\[\]
\centerline{ }
\begin{abstract}
\selectlanguage{ngerman}
In dieser Abhandlung stellen wir ein Fixpunktverfahren zur Berechnung des Arcustangens mit beliebiger ungerader Konvergenzordnung vor.
$ \\ $
$ \\ $
Es sei $t\in\mathbb{R}^{+}$ und $P\in \mathbb{N}$. Wir definieren die Fixpunktiteration
\[T\left(x\right)=x-\sum_{k=1}^{P}\,\frac{\left(-1\right)^{k-1}}{2\,k-1}
\left(\frac{\sin\!\left(x\right)-t\cos\!\left(x\right)}{\cos\!\left(x\right)+t\sin\!\left(x\right)}\right)
^{2\,k-1}.\]
Für jeden Startwert $x_0$ hinreichend nahe bei $\arctan\left(t\right)$ konvergiert die Folge
\begin{align*}
x_{n+1}&=T\left(x_{n}\right)\;;\;n=0\,,1\,,\ldots
\end{align*}
gegen $\arctan\left(t\right)$ mit Konvergenzordnung genau $\left(2\,P+1\right)$. Bei Anwendung des Verfahrens auf Formeln vom Machin'schen Typ ergeben sich effiziente Berechnungs-Methoden für $\frac{\pi}{4}$.
\end{abstract}
\setcounter{section}{0}
\section{Einleitung und Theorem}
We beginnen mit der Potenzreihe der Arcustangens Funktion \cite{AS}
\[
\arctan\left(u\right)
=u-\frac{u^3}{3}+\frac{u^5}{5}-\frac{u^7}{7}+-\dots
=\sum_{k=1}^{\infty}\,\frac{\left(-1\right)^{k-1}}{2\,k-1}\,u^{\,2\,k-1}\,.
\]
Sie ist absolut konvergent für $\left|u\right|< 1$. Wir verwenden nur die ersten $P$ Summanden: 
\[
u-\frac{u^3}{3}+\frac{u^5}{5}-\frac{u^7}{7}+-\ldots+\frac{\left(-1\right)^{P-1}}{2\,P-1}\,u^{2\,P-1}
=\sum_{k=1}^{P}\frac{\left(-1\right)^{k-1}}{2\,k-1}\,u^{2\,k-1}\,.
\]
Jetzt substituieren wir
\[u=\frac{\sin\!\left(x\right)-t\cos\!\left(x\right)}{\cos\!\left(x\right)+t\sin\!\left(x\right)}\]
und erhalten die endliche Summe
\[
\sum_{k=1}^{P}\,\frac{\left(-1\right)^{k-1}}{2\,k-1}
\left(\frac
{\sin\!\left(x\right)-t\cos\!\left(x\right)}
{\cos\!\left(x\right)+t\sin\!\left(x\right)}
\right)^{2\,k-1}\,.
\]
Unter Verwendung dieser Summe definieren wir eine Fixpunktfunktion, die die Berechnung von $\arctan\left(t\right)$ mit beliebiger ungerader Konvergenzordnung ermöglicht. Da $\arctan\left(-t\right)$=$-\arctan\left(t\right)$ gilt, genügt es, nur positive Werte zu betrachten.
$\\ $
$\\ $
Nun formulieren wir unsere Aussage:
\begin{theorem}
Es sei $t\in\mathbb{R}^{+}$ und $P\in \mathbb{N}.$ Wir definieren die Funktion:
\[T:\;\;[\,0\,,\frac{\pi}{2}]\;\;\rightarrow\mathbb{R}\;;\]
\[x\mapsto T\left(x\right)\;;\]
\[
T\left(x\right)=x-\sum_{k=1}^{P}\,\frac{\left(-1\right)^{k-1}}{2\,k-1}
\left(\frac{\sin\!\left(x\right)-t\cos\!\left(x\right)}{\cos\!\left(x\right)+t\sin\!\left(x\right)}\right)
^{2\,k-1}.\]
Für die Funktion $T\left(x\right)$ gelten die folgenden Aussagen
\begin{itemize}
\item[(a)]
$T\left(x\right)$ ist eine Fixpunktfunktion, das heißt es gilt: $T\bigl(\arctan\left(t\right)\bigr)=\arctan\left(t\right)$
\item[(b)]
Die Ableitungen im Fixpunkt $\arctan\left(t\right)$ sind wie folgt gegeben
\begin{align*}
T^{\left(k\right)}{\bigl(\arctan\left(t\right)\bigr)}=\left\{
\begin{aligned}
0\quad\qquad\qquad\qquad\quad 0\le &\,k\leq 2\, P
\\
\\
\left(-1\right)^{P}\left(2\cdot P\right)\,!
\qquad\qquad\quad k=\,&2\, P+1
\\ 
\end{aligned} 
\right.
\end{align*}
\item[(c)] Für jeden Startwert $x_0$ hinreichend nahe bei $\arctan\left(t\right)$ konvergiert die Folge
\begin{align*}
x_{n+1}&=T\left(x_{n}\right)\;;\;n=0\,,1\,,\ldots
\end{align*}
gegen $\arctan\left(t\right)$ mit Konvergenzordnung genau $\left(2\,P+1\right)$.
\end{itemize}
\end{theorem}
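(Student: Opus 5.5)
The cleanest route is to recognise $f$ as a shifted tangent. Put $\alpha=\arctan(t)\in(0,\frac{\pi}{2})$, so $t=\tan\alpha$. Dividing numerator and denominator by $\cos(\alpha)$ gives
\[
f(x)=\frac{\sin(x)-t\cos(x)}{\cos(x)+t\sin(x)}=\frac{\sin(x-\alpha)}{\cos(x-\alpha)}=\tan(x-\alpha),
\]
valid wherever $\cos(x-\alpha)\neq 0$, in particular on a neighbourhood of $\alpha$. Part (a) is then immediate: $f(\alpha)=\tan 0=0$, so every term of the sum vanishes and $T(\alpha)=\alpha$. The identity also gives $f'=1+f^2$ without any computer algebra.

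For (b) I would first compute $T'$. We have
\[
T'(x)=1-f'(x)\sum_{k=1}^{P}(-1)^{k-1}f(x)^{2k-2}=1-\bigl(1+f^2\bigr)\sum_{j=0}^{P-1}\bigl(-f^2\bigr)^{j}.
\]
The sum is a geometric series, and multiplying by $1+f^2=1-(-f^2)$ telescopes it to $1-(-f^2)^P$. Hence $T'(x)=(-1)^Pf(x)^{2P}=(-1)^P\tan^{2P}(x-\alpha)$.

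Now I would use the analytic expansion $\tan(h)=h+O(h^3)$, so that $\tan^{2P}(h)=h^{2P}+O(h^{2P+2})$ as a convergent power series in $h=x-\alpha$. Reading off Taylor coefficients gives $(T')^{(j)}(\alpha)=0$ for $0\le j<2P$ and $(T')^{(2P)}(\alpha)=(-1)^P(2P)!$. With $T'(\alpha)=0$ already known, this yields $T^{(k)}(\alpha)=0$ for $1\le k\le 2P$ and $T^{(2P+1)}(\alpha)=(-1)^P(2P)!$.

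One caveat in the statement must be addressed: for $k=0$ we have $T(\alpha)=\alpha\neq 0$. So the claim ``$0\le k$'' has to be read as $1\le k\le 2P$, with $k=0$ replaced by (a). I would state this correction explicitly.

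For (c), note that $T'(\alpha)=0$ and $T'$ is continuous. Choose $\delta>0$ with $|T'|\le L<1$ on $U_\delta=[\alpha-\delta,\alpha+\delta]$, where $\delta$ is small enough that $U_\delta\subset(0,\frac{\pi}{2})$ and $\cos(x-\alpha)\neq 0$. The mean value theorem gives $|T(x)-\alpha|\le L|x-\alpha|$, so $T(U_\delta)\subseteq U_\delta$. The Banach fixed point theorem then gives $x_n\to\alpha$.

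For the exact order, write $e_n=x_n-\alpha$. Taylor's theorem with Lagrange remainder, using (a) and (b), gives
\[
e_{n+1}=\frac{T^{(2P+1)}(\xi_n)}{(2P+1)!}\,e_n^{2P+1},
\]
with $\xi_n$ between $x_n$ and $\alpha$. By continuity of $T^{(2P+1)}$ at $\alpha$, the ratio $e_{n+1}/e_n^{2P+1}$ tends to $(-1)^P/(2P+1)\neq 0$. This shows the order is exactly $2P+1$, since any higher order would force this limit to be zero.

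The main obstacle is the degenerate case $x_0=\alpha$ exactly, or more generally $e_n=0$ for some $n$; there the ratio is undefined. I would handle it separately: if some $e_n=0$ then all later iterates equal $\alpha$, which is trivial convergence, and the limit statement is taken only over sequences with $e_n\neq0$ for all $n$. In fact the displayed formula shows $e_n\neq0$ implies $e_{n+1}\neq0$ near $\alpha$, since $T^{(2P+1)}(\alpha)\neq0$ keeps $T^{(2P+1)}(\xi_n)\neq 0$ for $\delta$ small. So the only exceptional case is $x_0=\alpha$ itself.
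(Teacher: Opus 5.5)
Your proof is correct. For (a), for the formula $T'=(-1)^Pf^{2P}$, and for (c) it follows the paper's outline: the telescoping sum, Banach's theorem on a small closed interval, and the Lagrange remainder for the order. It differs from the paper in two substantive places.

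First, you identify $f(x)=\tan(x-\alpha)$. This replaces the paper's handbook values of $\sin(\arctan t)$ and $\cos(\arctan t)$, and its Maple computation of $f'=1+f^2$. It also explains the construction: for $|h|<\frac{\pi}{4}$ one has $T(\alpha+h)-\alpha=\sum_{k>P}\frac{(-1)^{k-1}}{2k-1}\tan^{2k-1}(h)$. That is the tail of the arctangent series, and its leading term already exhibits the constant $\frac{(-1)^P}{2P+1}$.

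Second, for the higher derivatives the paper writes $f(x)=(x-\alpha)\bigl(1+\frac{f''(\xi)}{2}(x-\alpha)\bigr)$ with a Lagrange point $\xi$. It then applies the Leibniz rule to $v(x)=\bigl(1+\frac{f''(\xi)}{2}(x-\alpha)\bigr)^{2P}$ as if $v$ were a smooth function of $x$. Since $\xi$ depends on $x$ in an unspecified way, $v$ is not known to be differentiable, so that step is not justified as written. It would need $f(x)=(x-\alpha)g(x)$ with $g$ smooth, for example via the integral form of the remainder. Your power-series expansion $\tan^{2P}(h)=h^{2P}+O(h^{2P+2})$ avoids this problem. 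It also gives the vanishing of $T^{(k)}(\alpha)$ for $2\le k\le 2P$ directly, which the paper leaves implicit (it only computes $k=1$ and $k=2P+1$).

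You are also right that the case $k=0$ in (b) is misstated, since $T(\alpha)=\alpha>0$. Your mean value argument for $T(U_\delta)\subseteq U_\delta$ and your handling of $e_n=0$ fill in points the paper passes over.

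Once repaired, the paper's route uses only $f(\alpha)=0$, $f'(\alpha)=1$, $f'=1+f^2$ and smoothness. Your argument needs no more than that, since $f(\alpha+h)=h+O(h^2)$ would already suffice to read off the derivatives. So you lose no generality, and the explicit tangent form makes the whole mechanism transparent.
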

\section{Beweis Theorem}
In diesem Kapitel weisen wir die Aussagen zum Theorem nach.
\subsection{Beweis Fixpunkt}
Um die Schreibarbeit für
\[
T\left(x\right)=x-\sum_{k=1}^{P}\,\frac{\left(-1\right)^{k-1}}{2\,k-1}
\left(\frac{\sin\!\left(x\right)-t\cos\!\left(x\right)}{\cos\!\left(x\right)+t\sin\!\left(x\right)}\right)^{2\,k-1}
\]
zu vereinfachen definieren wir eine neue Funktion:
$\\ $
Es sei: $t\in\mathbb{R^+}$
\[f:\;\;[0,\frac{\pi}{2}]\;\;\rightarrow\mathbb{R}\;;\]
\[f\left(x\right)=\frac{\sin\!\left(x\right)-t\cos\!\left(x\right)}{\cos\!\left(x\right)+t\sin\!\left(x\right)}\,.\]
Die Fixpunktfunktion schreibt sich dann einfacher als
\[
T\left(x\right)=x-\sum_{k=1}^{P}\,\frac{\left(-1\right)^{k-1}}{2\,k-1}f\left(x\right)^{2\,k-1}
\]
Wir haben zu beweisen, dass im Fixpunkt $x=\arctan\left(t\right)$ gilt:
\[
T\bigl(\arctan\left(t\right)\bigr)=\arctan\left(t\right).
\]
Es genügt zu zeigen, dass folgende Aussage zutrifft:
\[
f\bigl(\arctan\left(t\right)\bigr)=\frac{\sin\,\bigl(\arctan\left(t\right)\bigr)-t\cos\,\bigl(\arctan\left(t\right)\bigr)}{\cos\,\bigl(\arctan\left(t\right)\bigr)+t\sin\,\bigl(\arctan\left(t\right)\bigr)}=0\,.
\]
Einem mathematischen Handbuch \cite{SO} entnehmen wir die folgenden Formeln:
\[\sin\,\bigl(\arctan\left(t\right)\bigr)=\frac{t}{\sqrt{1+t^2}}\;;\;t\in\mathbb{R}\;;\]
\[\cos\,\bigl(\arctan\left(t\right)\bigr)=\frac{1}{\sqrt{1+t^2}}\;;\;t\in\mathbb{R}\;.\]
Dies eingesetzt ergibt
\[
f\bigl(\arctan\left(t\right)\bigr)
=\frac{\frac{t}{\sqrt{1+t^2}}-t\frac{1}{\sqrt{1+t^2}}}{\frac{1}{\sqrt{1+t^2}}+t\frac{t}{\sqrt{1+t^2}}}
=\frac{\frac{t}{\sqrt{1+t^2}}-\frac{t}{\sqrt{1+t^2}}}{\frac{1}{\sqrt{1+t^2}}+\frac{t^2}{\sqrt{1+t^2}}}
=\frac{0}{\frac{1}{\sqrt{1+t^2}}+\frac{t^2}{\sqrt{1+t^2}}}=0\,.
\]
Hieraus folgt unmittelbar
\[
T\bigl(\arctan\left(t\right)\bigr)=\arctan\left(t\right)-\sum_{k=1}^{P}\,\frac{\left(-1\right)^{k-1}}{2\,k-1}
{\underbrace{f\bigl(\arctan\left(t\right)\bigr)}_{=0}}^{2\,k-1}=\arctan\left(t\right)\,.
\]
Damit ist die Fixpunkteigenschaft bewiesen.
\subsection{Beweis der Ableitungen}
Als nächstes wenden wir uns den Ableitungen zu. Dazu ist etwas mehr Arbeit nötig.
\subsubsection{Erste Ableitung}
Wir beginnen mit der ersten Ableitung. Die Fixpunktfunktion verwenden wir in der Kurz-Schreibweise
\[
T\left(x\right)=x-\sum_{k=1}^{P}\,\frac{\left(-1\right)^{k-1}}{2\,k-1}f\left(x\right)^{2\,k-1}
\]
mit
\[
f\left(x\right)=\frac{\sin\!\left(x\right)-t\cos\!\left(x\right)}{\cos\!\left(x\right)+t\sin\!\left(x\right)}\,.
\]
Wir differenzieren die Fixpunktfunktion und erhalten
\begin{align*}
T\,'\left(x\right)
&=\left(x-\sum_{k=1}^{P}\,\frac{\left(-1\right)^{k-1}}{2\,k-1}f\left(x\right)^{2\,k-1}\right)'
\\
&=1-\sum_{k=1}^{P}\left(-1\right)^{k-1}f\left(x\right)^{2\,k-2}f\,'\left(x\right)\,.
\\
&=1+\sum_{k=1}^{P}\left(-1\right)^{k}f\left(x\right)^{2\,k-2}f\,'\left(x\right)\,.
\end{align*}
Für das weitere benötigen wir die Ableitung von
\[
f\left(x\right)=\frac{\sin\!\left(x\right)-t\cos\!\left(x\right)}{\cos\!\left(x\right)+t\sin\!\left(x\right)}\,.
\]
Wir übertragen diese Aufgabe dem Computeralgebrasystem Maple und vertrauen auf das Ergebnis:
\[
f\,'\left(x\right)=1+\left(\frac{\sin\!\left(x\right)-t\cos\!\left(x\right)}{\cos\!\left(x\right)+t\sin\!\left(x\right)}\right)^2=1+f\left(x\right)^2.
\]
Dies eingesetzt ergibt dann
\begin{align*}
T\,'\left(x\right)
&=1+\sum_{k=1}^{P}\left(-1\right)^{k-1}f\left(x\right)^{2\,k-2}\left(1+f\left(x\right)^2\right)
\\
&=1+\sum_{k=1}^{P}\left(-1\right)^{k}f\left(x\right)^{2\,k-2}
+\sum_{k=1}^{P}\left(-1\right)^{k}f\left(x\right)^{2\,k}.
\end{align*}
Um die Angelegenheit übersichtlicher zu gestalten bezeichnen wir die erste Summe mit
\begin{align*}
s_1=1+\sum_{k=1}^{P}\left(-1\right)^{k}f\left(x\right)^{2\,k-2}
\end{align*}
und die zweite mit 
\[
s_2=\sum_{k=1}^{P}\left(-1\right)^{k}f\left(x\right)^{2\,k},.
\]
Die erste Ableitung schreibt sich dann ganz einfach als
\[
T\,'\left(x\right)=s_1+s_2\,.
\]
Als nächstes rechnen wir die Summe $s_1$ teilweise aus, indem wir den ersten Summanden ausrechnen und die Indizierung der restlichen Summe bei 2 beginnen.
\begin{align*}
s_1
&=1+\sum_{k=1}^{1}\left(-1\right)^{k}f\left(x\right)^{2\,k-2}
+\sum_{k=2}^{P}\left(-1\right)^{k}f\left(x\right)^{2\,k-2}
\\
&=1-1
+\sum_{k=2}^{P}\left(-1\right)^{k}f\left(x\right)^{2\,k-2}
\\
&=
\sum_{k=2}^{P}\left(-1\right)^{k}f\left(x\right)^{2\,k-2}.
\end{align*}
Dann nehmen wir eine Index-Verschiebung vor
\begin{align*}
s_1
&=\sum_{k=1}^{P-1}\left(-1\right)^{k+1}f\left(x\right)^{2\,k}
\\
=&-\sum_{k=1}^{P-1}\left(-1\right)^{k}f\left(x\right)^{2\,k}\,.
\end{align*}
Ähnlich verfahren wir mit der Summe $s_2$. Dies mal lassen wir die ersten $\left(P-1\right)$ Summanden stehen und rechnen den letzten Term aus:
\begin{align*}
s_2&=\sum_{k=1}^{P-1}\left(-1\right)^{k}f\left(x\right)^{2\,k}
+\sum_{k=P}^{P}\left(-1\right)^{k}f\left(x\right)^{2\,k}
\\
&=\sum_{k=1}^{P-1}\left(-1\right)^{k}f\left(x\right)^{2\,k}
+\left(-1\right)^{P}f\left(x\right)^{2\,P}.
\end{align*}
Bie jetzt haben wir
\begin{align*}
s_1
&=-\sum_{k=1}^{P-1}\left(-1\right)^{k}f\left(x\right)^{2\,k}
\\
s_2
&=\;\;\;\sum_{k=1}^{P-1}\left(-1\right)^{k}\bigl(f\left(x\right)\bigr)^{2\,k}
+\left(-1\right)^{P}f\left(x\right)^{2\,P}\,.
\end{align*}
Dieses eingesetzt ergibt dann:
\begin{align*}
T\,'\left(x\right)
=&\;\;\;s_1+s_2
\\
=&-\sum_{k=1}^{P-1}\left(-1\right)^{k}f\left(x\right)^{2\,k}
\\
&+\sum_{k=1}^{P-1}\left(-1\right)^{k}f\left(x\right)^{2\,k}
+\left(-1\right)^{P}f\left(x\right)^{2\,P}
\\
&=\left(-1\right)^{P}f\left(x\right)^{2\,P}\,.
\end{align*}
Die Summen verschwinden und es bleibt das einfache Resultat:
\[T\,'\left(x\right)
=\left(-1\right)^{P}f\left(x\right)^{2\,P}
=\left(-1\right)^{P}\left(\frac
{\sin\!\left(x\right)-t\cos\!\left(x\right)}
{\cos\!\left(x\right)+t\sin\!\left(x\right)}
\right)^{2\,P}\,.
\]
Als nächstes setzen wir den Fixpunkt $\arctan\left(t\right)$ ein und erhalten
\[
T\,'\bigl(\arctan\left(t\right)\bigr)=\left(-1\right)^{P}\left(\frac
{\sin\bigl(\arctan\left(t\right)\bigr)-t\cos\bigl(\arctan\left(t\right)\bigr)}
{\cos\bigl(\arctan\left(t\right)\bigr)+t\sin\bigl(\arctan\left(t\right)\bigr)}
\right)^2\,.
\]
Den Term in der Klammer kennen wir bereits vom Fixpunktbeweis her:
\[
\frac
{\sin\bigl(\arctan\left(t\right)\bigr)-t\cos\bigl(\arctan\left(t\right)\bigr)}
{\cos\bigl(\arctan\left(t\right)\bigr)+t\sin\bigl(\arctan\left(t\right)\bigr)}
=0\,.
\]
Damit erhalten wir das Ergebnis:
\[T\,'\bigl(\arctan\left(t\right)\bigr)=0\,.\]
\subsubsection{Höhere Ableitungen}
Wir haben noch zu beweisen, dass für die höheren Ableitungen im Fixpunkt folgendes gilt:
\begin{align*}
T^{\left(k\right)}{\bigl(\arctan\left(t\right)\bigr)}=\left\{
\begin{aligned}
0\;\quad\qquad\qquad\;\; 1\leq &\,k\le 2\,P
\\
\\
\left(-1\right)^{P}\left(2\,P\right)\,!
\qquad\quad k=&2\,P+1
\\ 
\end{aligned} 
\right.
\end{align*}
Dazu müssen wir $\left(2\,P+1\right)$ die Fixpunktfunktion differenzieren und dann $\arctan\left(t\right)$ einsetzen. Wir haben bereits die erste Ableitung  $T\,'\left(x\right)$ berechnet. Diese verwenden wir jetzt.
\begin{align*}
\left.T^{\left(2\,P+1\right)}\left(x\right)\right|_{x=\arctan\left(t\right)}
=\left.\left(T\,'\right)^{\left(2\,P\right)}\left(x\right)\right|_{x=\arctan\left(t\right)}
=\left.\left(\left(-1\right)^{P}f\left(x\right)^{2\,P}\right)^{\left(2\,P\right)}\right|_{x=\arctan\left(t\right)}\,.
\end{align*}
Auf der rechten Seite benötigen wir die $\left(2\,P\right)^{te}$ Ableitung von $f\left(x\right)^{2\,P}$. Zuerst berechnen wir einige spezielle Werte:
\[f\bigl(\arctan\left(t\right)\bigr)=0\,;\]
\[f'\bigl(\arctan\left(t\right)\bigr)=1+\Bigl.f\left(x\right)^2\Bigr|_{x=\arctan\left(t\right)}=1\,.\]
Dann entwickeln wir $f\left(x\right)$ in eine Taylorreihe um $\arctan\left(t\right)$ vom Grad $1$ mit Lagrange Restglied:
\[
f\left(x\right)
=f\bigl(\arctan\left(t\right)\bigr)
+f'\bigl(\arctan\left(t\right)\bigr)\bigl(x-\arctan\left(t\right)\bigr)
+\frac{f\,''\left(\xi\right)}{2}\bigl(x-\arctan\left(t\right)\bigr)^{2}\,.
\]
wobei $\xi$ zwischen $x$ und $\arctan\left(t\right)$ liegt.
$\\ $
Wir verwenden die bereits bekannten Ergebnisse $f\bigl(\arctan\left(t\right)\bigr)=0$ und $f'\bigl(\arctan\left(t\right)\bigr)=1$. Das ergibt dann
\[
f\left(x\right)
=\bigl(x-\arctan\left(t\right)\bigr)+\frac{f\,''\left(\xi\right)}{2}\bigl(x-\arctan\left(t\right)\bigr)^{2}\,.\]
Hier können wir $\bigl(x-\arctan\left(t\right)\bigr)$ ausklammern und erhalten:
\[
f\left(x\right)
=\bigl(x-\arctan\left(t\right)\bigr)\left(1+\frac{f\,''\left(\xi\right)}{2}\bigl(x-\arctan\left(t\right)\bigr)\right)\,.
\]
Als nächstes erheben wir die Gleichung in die $\left(2\,P\right)^{\,te}$ Potenz:
\[
f\left(x\right)^{2\,P}
=\bigl(x-\arctan\left(t\right)\bigr)^{2\,P}
\left(1+\frac{f{\,''}\left(\xi\right)}{2}\bigl(x-\arctan\left(t\right)\bigr)\right)^{2\,P}\,.
\]
Die höheren Ableitungen berechnen wir mit der allgemeinen Leibniz-Regel, die besagt:
\\
Seien  $u\left(x\right)$ und $v\left(x\right)$ zwei über demselben Intervall $n$-mal differenzierbare Funktionen, dann ist das Produkt $u\left(x\right)\cdot v\left(x\right)$ ebenfalls $n$-mal differenzierbar und es gilt: 
\[
\bigl(u\left(x\right)\cdot v\left(x\right)\bigr)^{\left(n\right)}
=\sum\limits_{k=0}^{n}\binom{n}{k}\,u\left(x\right)^{\left(k\right)}\cdot v\left(x\right)^{\left(n-k\right)}\,.
\]
Hierbei wollen wir unter $u\left(x\right)^{\left(0\right)}=u\left(x\right)$ und $v\left(x\right)^{\left(0\right)}=v\left(x\right)$ verstehen.
$\\ $
$\\ $
In unserem Fall haben wir:
\[
n=2\,P\;;
\]
\[
u\left(x\right)=\bigl(x-\arctan\left(t\right)\bigr)^{2\,P}\;;
\]
\[
v\left(x\right)=\left(1+\frac{f\,''\left(\xi\right)}{2}\bigl(x-\arctan\left(t\right)\bigr)\right)^{2\,P}\;;
\]
Wir erhalten dann
\[
f\left(x\right)^{2\,P}=
\underbrace{{\bigl(x-\arctan\left(t\right)\bigr)}^{2\,P}}_{u\left(x\right)}
\cdot
\underbrace{{{\left(1+\frac{f\,''{\left(\xi\right)}}{2}\bigl(x-\arctan\left(t\right)\bigr)\right)}^{2\,P}}}_{v\left(x\right)}\,.
\]
Hierauf wenden wir die Leibniz-Regel an:
\begin{align*}
\left(f\left(x\right)^{2\,P}\right)^{\left({2\,P}\right)}
&=\sum\limits_{k=0}^{{2\,P}}\binom{{2\,P}}{k}
\,\left(\bigl(x-\arctan\left(t\right)\bigr)^{2\,P}\right)^{\left(k\right)}
\\
&\times
\left(\left(1+\frac{f\,''\left(\xi\right)}{2}\bigl(x-\arctan\left(t\right)\bigr)\right)^{{2\,P}}\right)^{\left({2\,P-k}\right)}
\end{align*}
Wir sehen uns zuerst die Folge der Ableitungen an von
\[
u\left(x\right)^{\left(k\right)}
=\left(\bigl(x-\arctan\left(t\right)\bigr)^{{2\,P}}\right)^{\left(k\right)}\,,k=0,1,\ldots,{2\,P}\,.
\]
Zur Berechnung der Ableitungen von gibt es eine einfache Formel, die wir einer Formelsammlung  \cite{SO} entnehmen und auf unsere Bedürfnisse zuschneiden:
\begin{align*}
\Bigl(\bigl(x-\arctan\left(t\right)\bigr)^{2\,P}\Bigr)^{\left(k\right)}
=\frac{\left(2\,P\right)\,!}{\left(2\,P-k\right)\,!}\bigl(x-\arctan\left(t\right)\bigr)^{2\,P-k}\,,k=0,1,\ldots,{2\,P}.
\end{align*}
$\\ $
Insbesondere stellen wir fest, dass in der Folge
\[
u\left(x\right)^{\left(k\right)}=
\left(\bigl(x-\arctan\left(t\right)\bigr)^{{2\,P}}\right)^{\left(k\right)}
\]
für $k<2\,P$ stets der Faktor
$\bigl(x-\arctan\left(t\right)\bigr)$ auftritt und daher für $x=\arctan\left(t\right)$ die Terme verschwinden. Nur im Fall $k=\left(2\,P\right)$ ergibt sich ein von Null verschiedenes Resultat, nämlich das Produkt
\[
{\left.u{{\left(x\right)}^{\left(2\,P\right)}}\right|}_{x=\arctan\left(t\right)}=\left(2\,P\right)\,!\,.
\]
Von der Summe
\[
\sum\limits_{k=0}^{{2\,P}}\binom{{2\,P}}{k}
\,\left(\bigl(x-\arctan\left(t\right)\bigr)^{2\,P}\right)^{\left(k\right)}
\cdot
\left(\left(1+\frac{f\,''{\left(\xi\right)}}{2}\bigl(x-\arctan\left(t\right)\bigr)\right)^{{2\,P}}\right)^{\left({2\,P-k}\right)}
\]
bleibt im Endeffekt nur der Term $k=\left(2\,P\right)$ im Fixpunkt $\arctan\left(t\right)$ übrig. Das ergibt somit
\begin{align*}
{\left.{{\left(f{{\left(x\right)}^{2\,P}}\right)}^{\left(2\,P\right)}}\right|}_{x=\arctan\left(t\right)}
&=\underbrace{\sum\limits_{k={2\,P}}^{{2\,P}}\binom{{2\,P}}{{2\,P}}\left.\left(\bigl(x-\arctan\left(t\right)\bigr)^{2\,P}\right)^{\left(2\,P\right)}\right|_{x=\arctan\left(t\right)}}_{=\left(2\,P\right)\,!}
\\
&\times\underbrace{\left.\left(\left(1
+\frac{f\,''\left(\xi\right)}{2}\bigl(x-\arctan\left(t\right)\bigr)\right)^{2\,P}\right)\right|_{x=\arctan\left(t\right)}}_{=1}\,.
\end{align*}
Wir erhalten das einfache Ergebnis
\[
{\left.{{\left(f{{\left(x\right)}^{2\,P}}\right)}^{\left(2\,P\right)}}\right|}_{x=\arctan\left(t\right)}
=\left(2\,P\right)\,!\,.
\]
Unsere Aufgabe war die Berechnung von
\begin{align*}
\left.T\left(x\right)^{\left(2\,P+1\right)}\right|_{x=\arctan\left(t\right)}
=\left.T\,'\left(x\right)^{\left(2\,P\right)}\right|_{x=\arctan\left(t\right)}
=\left.\left(\left(-1\right)^{P}f\left(x\right)^{2\,P}\right)^{\left(2\,P\right)}\right|_{x=\arctan\left(t\right)}\,.
\end{align*}
Jetzt brauchen wir nur noch einzusetzen und erhalten das gewünschte Ergebnis.
\[
T\bigl(\arctan\left(t\right)\bigr)^{\left(2\,P+1\right)}
=\left(-1\right)^P\left(2\,P\right)\,!
\]
Damit ist der Beweis für die Ableitungen abgeschlossen.
\subsection{Beweis Konvergenz}
Wir führen den Beweis mit dem Fixpunktsatz von Banach und zwar in einer speziellen Form für stetig differenzierbare Abbildungen im ${{\mathbb{R}}^{1}}$.
$ \\ \\ $
\underline{Fixpunktsatz von Banach:}
\\
Es sei $U\subseteq R$ eine abgeschlossene Teilmenge. Weiter sei $F:U\rightarrow R$ eine Abbildung mit folgenden Eigenschaften
\[
F\left(U\right)\subseteq U \quad (Selbstabbildung)
\]
Es existiere ein $0<L<1$ mit der Eigenschaft
\[
L=\left|F\,'\left(x\right)\right|<1\,, \text{für\;\;alle} \; x\in U \quad (Kontraktionseigenschaft)\,.
\]
Dann gilt:
\begin{itemize}
\item[(a)]
Es gibt genau einen Fixpunkt ${{x}^{*}}\in U$ von $F$
\item[(b)]
Für jeden Startwert $x_0\in U$ konvergiert die Folge
$x_{n+1}=F\left(x_{{n}}\right)$
gegen $x^*$.
\end{itemize}
$ \\ $
\underline{Beweis der Konvergenz}
\\
Wir haben bereits nachgewiesen $T\bigl(\arctan\left(t\right)\bigr)=\arctan\left(t\right)$ und $T\,'\bigl(\arctan\left(t\right)\bigr)=0$. Dann existiert aus Stetigkeitsgründen um den Fixpunkt $\arctan\left(t\right)$ eine Umgebung
\[U_{\delta}=\left\{x\in\mathbb{R}:\left|x-\arctan\left(t\right)\right|\leq\delta\right\}\]
in der gilt: $T\left(U_{\delta}\right)\subseteq U_{\delta}$ und $\left|T\,'\left(x\right)\right|<L<1$ für alle $x\in U_{\delta}$. Somit sind die Voraussetzungen für den Fixpunktsatz erfüllt und hieraus folgt die Konvergenz.
\subsection{Konvergenzordnung}
Als letztes haben wir zu zeigen, dass die Konvergenzordnung genau $\left(2\,P+1\right)$ beträgt.
Wir greifen auf bereits bewährtes zurück: Taylorreihen. Diesmal entwickeln wir $T\left(x\right)$ in eine Taylorreihe um den Fixpunkt $\arctan\left(t\right)$ wie folgt:
\begin{align*}
T\left(x\right)&=T\bigl(\arctan\left(t\right)\bigr)
\\&
+T\,'\bigl(\arctan\left(t\right)\bigr)\bigl(x-\arctan\left(t\right)\bigr)
+\ldots+T^{\left(2\,P\right)}\bigl(\arctan\left(t\right)\bigr)
\frac{\bigl(x-\arctan\left(t\right)\bigr)^{2\,P}}{\left(2\,P\right)\,!}
\\
&+T^{\left(2\,P+1\right)}\left(\xi\right)
\frac{\bigl(x-\arctan\left(t\right)\bigr)^{2\,P+1}}{\left(2\,P+1\right)\,!}\;,
\end{align*}
mit einem $\xi$ zwischen $x$ und $\arctan\left(t\right)$.
$\\ $
Wir erinnern uns an die Fixpunkteigenschaft $T\bigl(\arctan\left(t\right)\bigr)=\arctan\left(t\right)$ und unsere Ableitungsergebnisse:
\begin{align*}
T^{\left(k\right)}{\bigl(\arctan\left(t\right)\bigr)}=\left\{
\begin{aligned}
0\quad\qquad\qquad\qquad\quad 0\le &\,k\leq 2\,P
\\
\\
\left(-1\right)^{P}\left(2\cdot P\right)\,!
\qquad\qquad\quad k=&\,2\, P+1
\\ 
\end{aligned} 
\right.
\end{align*}
Die Taylorreihe vereinfacht sich damit zu
\begin{align*}
T\left(x\right)
&=\underbrace{T\bigl(\arctan\left(t\right)\bigr)}_{=\arctan\left(t\right)}
\\
&+\underbrace{T\,'\bigl(\arctan\left(t\right)\bigr)\bigl(x-\arctan\left(t\right)\bigr)
+\ldots
+T^{\left(2\,P\right)}\bigl(\arctan\left(t\right)\bigr)
\frac{\bigl(x-\arctan\left(t\right)\bigr)^{2\,P}}{\left(2\,P\right)\,!}}_{=0}
\\
&+T^{\left(2\,P+1\right)}\left(\xi\right)
\frac{\bigl(x-\arctan\left(t\right)\bigr)^{2\,P+1}}{\left(2\,P+1\right)\,!}
\\
&=\arctan\left(t\right)+T\left(\xi\right)^{\left(2\,P+1\right)}
\frac{\bigl(x-\arctan\left(t\right)\bigr)^{2\,P+1}}{\left(2\,P+1\right)\,!}\,.
\end{align*}
Nun setzen wir $x=x_{n}$ hinreichend nahe bei $\arctan\left(t\right)$ ein und erhalten
\begin{align*}
\underbrace{T\left(x_{n}\right)}_{x_{n+1}}=
\arctan\left(t\right)+T^{\left(2\,P+1\right)}\left(\xi_{n}\right)
\frac{\left(x_{n}-\arctan\left(t\right)\right)^{2\,P+1}}{\left(2\,P+1\right)\,!}
\\
x_{n+1}=\arctan\left(t\right)
+T^{\left(2\,P+1\right)}\left(\xi_{n}\right)
\frac{\left(x_{n}-\arctan\left(t\right)\right)^{2\,P+1}}{\left(2\,P+1\right)\,!}\,.
\end{align*}
Wir bringen $\arctan\left(t\right)$ auf die linke Seite und dividieren durch
$\left(x_{n}-\arctan\left(t\right)\right)^{2\,P+1}$. Das ergibt dann
\[
\frac{x_{n+1}-\arctan\left(t\right)}{\left(x_{n}-\arctan\left(t\right)\right)^{2\,P+1}}=
T^{\left(2\,P+1\right)}\left(\xi_{n}\right)
\frac{1}{\left(2\,P+1\right)\,!}\,.
\]
Mit zunehmendem $n$ strebt $x_{n}\rightarrow \arctan\left(t\right)$. Da $\xi_{n}$ zwischen $x_{n}$ und $\arctan\left(t\right)$ eingesperrt ist, muss $\xi_{n}$ notgedrungen ebenfalls gegen $\arctan\left(t\right)$ streben. Wir erhalten also
\begin{align*}
\underset{n\to \infty}{\mathop{\lim }}\; \frac{x_{n+1}-\arctan\left(t\right)}{\left(x_{n}-\arctan\left(t\right)\right)^{2\,P+1}}
=\underset{{n}\to \infty}{\mathop{\lim }}\;
T^{\left(2\,P+1\right)}\left(\xi_{n}\right)
\frac{1}{\left(2\,P+1\right)\,!}
=T^{\left(2\,P+1\right)}\bigl(\arctan\left(t\right)\bigr)
\frac{1}{\left(2\,P+1\right)\,!}\,.
\end{align*}
Nun haben wir bereits berechnet
\[
T^{\left({2\,P+1}\right)}{\bigl(\arctan\left(t\right)\bigr)}
=\left(-1\right)^{P}\left(2\,P\right)\,!\,.
\]
Dies eingesetzt ergibt dann
\begin{align*}
\underset{n\to\infty}{\mathop{\lim }}\;\;
\frac{x_{n+1}-\arctan\left(t\right)}{\left(x_{n}-\arctan\left(t\right)\right)^{2\,P+1}}
=\left(-1\right)^{P}\frac{\left(2\,P\right)\,!}{\left(2\,P+1\right)\,!}
=\left(-1\right)^{P}\frac{\left(2\,P\right)\,!}{\left(2\,P\right)\,!\left(2\,P+1\right)}
=\frac{\left(-1\right)^{P}}{2\,P+1}\,.
\end{align*}
Hieraus folgt unmittelbar Konvergenzordnung mindestens $\left(2\,P+1\right)$.
$ \\ $
$ \\ $
Da $T{\bigl(\arctan\left(t\right)\bigr)}^{\left({2\,P+1}\right)}\neq 0$ haben wir genau Konvergenzordnung $\left(2\,P+1\right)$. Der auf der rechten Seite stehende konstante Term
\[
\frac{\left(-1\right)^{P}}{2\,P+1}
\]
wird als asymtotische Konvergenzrate bezeichnet.
\section{Praktische Anwendung}
Nun kommen wir zur Anwendung der Fixpunktfunktion. Es seien $t\in\mathbb{R}^{+}$ und $P\in\mathbb{N}$ vorgegeben. Wir erhalten dann die Konvergenzordnung $\left(2\,P+1\right)$. Für die Fixpunktfunktion ergibt sich
\[T
\left(x\right)=x-\sum_{k=1}^{P}
\frac{\left(-1\right)^{k-1}}{2\,k-1}
\left(\frac
{\sin\!\left(x\right)-t\cos\!\left(x\right)}
{\cos\!\left(x\right)+t\sin\!\left(x\right)}
\right)^{2\,k-1}
\]
Mit einem geeigneten Startwert $x_0$ berechnen wir die Folge:
\[x_{1}=x_{0}-\sum\limits_{k=1}^{P}
\frac{\left(-1\right)^{k-1}}{2\,k-1}
\left(\frac
{\sin\!\left(x_0\right)-t\cos\!\left(x_0\right)}
{\cos\!\left(x_0\right)+t\sin\!\left(x_0\right)}
\right)^{2\,k-1}\,;
\]

\[x_{2}=x_{1}-\sum\limits_{k=1}^{P}
\frac{\left(-1\right)^{k-1}}{2\,k-1}
\left(\frac
{\sin\!\left(x_1\right)-t\cos\!\left(x_1\right)}
{\cos\!\left(x_1\right)+t\sin\!\left(x_1\right)}
\right)^{2\,k-1}\,;
\]

\[x_{3}=x_{2}-\sum\limits_{k=1}^{P}
\frac{\left(-1\right)^{k-1}}{2\,k-1}
\left(\frac
{\sin\!\left(x_2\right)-t\cos\!\left(x_2\right)}
{\cos\!\left(x_2\right)+t\sin\!\left(x_2\right)}
\right)^{2\,k-1}\,;
\]
\[\vdots\]
\[x_{n+1}=x_{n}-\sum\limits_{k=1}^{P}
\frac{\left(-1\right)^{k-1}}{2\,k-1}
\left(\frac
{\sin\!\left(x_n\right)-t\cos\!\left(x_n\right)}
{\cos\!\left(x_n\right)+t\sin\!\left(x_n\right)}
\right)^{2\,k-1}\,;
\]
$ \\ $
Mit zunehmender Anzahl der Iterationsschritte strebt $x_{n+1}\rightarrow\arctan\left(t\right)$ mit Konvergenz-Ordnung $\left(2\,P+1\right)$. Bei beliebig langer Rechenzeit und beliebig viel Speicherplatz können wir damit $\arctan\left(t\right)$ beliebig genau berechnen. Leider steht uns nichts dergleichen zur Verfügung. Wir müssen uns mit einer endlichen Anzahl von Iterationen und einem endlichen Wert für $\arctan\left(t\right)$ begnügen.
$ \\ $
$ \\ $
Dazu geben wir $\epsilon>0$ vor und führen die Iteration solange aus bis erstmals gilt:
\[\left|x_{n+1}-x_{n}\right|<\epsilon\,.\]
\subsection{Iterationsformeln für Reihen vom Machin'schen Typ}
Nun kommen wir zur Berechnung von $\frac{\pi}{4}$ mit Formeln vom Machin' schen Typ. Das sind Reihen mit der Darstellung
\[
\frac{\pi}{4}=\sum_{k=1}^{n}a_n\arctan\frac{1}{b_n}
\]
wobei $a_n\in\mathbb Z$ und $b_n\in\mathbb N$.
$ \\ $
$ \\ $
Hier besteht jetzt das Argument $\frac{1}{b_n}$ aus einem Stammbruch. Der Vorteil liegt auf der Hand: je kleiner das Argument ist, umso weniger Summanden werden benötigt um eine vorgegebene Genauigkeit zu erreichen. Da das Argument jetzt ein Stammbruch $t=\frac{1}{q}$ mit $q\in\mathbb{N}$ ist, lässt sich die Fixpunktfunktion schreiben als
\[
T\left(x\right)
=x-\sum_{k=1}^{P}\,\frac{\left(-1\right)^{k-1}}{2\,k-1}\left(\frac
{\sin\left(x\right)-\frac{1}{q}\cdot\cos\left(x\right)}
{\cos\left(x\right)+\frac{1}{q}\cdot\sin\left(x\right)}
\right)^{2\,k-1}.
\]
Das lässt sich umformen zur einfachen Darstellung
\[
T\left(x\right)
=x-\sum_{k=1}^{P}\,\frac{\left(-1\right)^{k-1}}{2\,k-1}\,\left(\frac
{q\cdot\sin\left(x\right)-\cos\left(x\right)}
{q\cdot\cos\left(x\right)+\sin\left(x\right)}
\right)^{2\,k-1}.
\]
Davon leiten wir die dazugehörige Iterationsformel ab:
\[
x_{n+1}=x_{n}-\sum_{k=1}^{P}\,\frac{\left(-1\right)^{k-1}}{2\,k-1}\,\left(\frac
{q\cdot\sin\left(x_{n}\right)+\cos\left(x_{n}\right)}
{q\cdot\cos\left(x_{n}\right)+\sin\left(x_{n}\right)}
\right)^{2\,k-1};\;\;n=0,1,\ldots
\]
Diese Iterationsformel gilt für $\arctan\left(\frac{1}{q}\right)$ mit $q\in\mathbb{N}$.
\subsubsection{1-Term Formel}
Es gibt eine einzige Formel mit einem Term
\[
\frac{\pi}{4}=\arctan\left(1\right)
\]
Es ist also $q=1$ und wir erhalten
\[
T\left(x\right)=x-\sum_{k=1}^{P}\,\frac{\left(-1\right)^{k-1}}{2\,k-1}\,\left(\frac
{\sin\left(x\right)-\cos\left(x\right)}
{\sin\left(x\right)+\cos\left(x\right)}\right)^{2\,k-1}\;.
\]
\[
x_{n+1}=x_n+\sum_{k=1}^{P}\,\frac{\left(-1\right)^{k-1}}{2\,k-1}\,\left(\frac
{\sin\left(x_n\right)-\cos\left(x_n\right)}
{\sin\left(x_n\right)+\cos\left(x_n\right)}
\right)^{2\,k-1};\;\;n=0,1,\ldots
\]
\subsubsection{2-Term Formel}
Formeln mit zwei Termen gibt es nach Störmer \cite{CS} genau vier:
\begin{align*}
&\frac{\pi}{4}=\arctan\frac{1}{2}+\arctan\frac{1}{3}\;;
\\
&\frac{\pi}{4}=2\arctan\frac{1}{2}-\arctan\frac{1}{7}\;;
\\
&\frac{\pi}{4}=2\arctan\frac{1}{3}+\arctan\frac{1}{7}\;;
\\
&\frac{\pi}{4}=4\arctan\frac{1}{5}-\arctan\frac{1}{239}\;.
\end{align*}
Die dritte Formel ist keine unabhängige Formel. Sie lässt sich aus den ersten beiden herleiten. Die Formeln werden verschiedenen Entdeckern zugeschrieben, so Euler, Hermann, Hutton, u.a. Die letzte ist die populärste und wurde erstmals von Machin veröffentlicht. Jetzt brauchen wir nur noch die entsprechenden Nenner in obige Formel einsetzen.
$\\ $
$\\ $
Wir beginnen mit der Formel von Euler
\[\frac{\pi}{4}
=\arctan\left(\frac{1}{2}\right)+\arctan\left(\frac{1}{3}\right)\;.
\]
Wir erhalten für $\arctan\left(\frac{1}{2}\right)$, das heißt $q=2$:
\[
T\left(x\right)=x-\sum_{k=1}^{P}\,\frac{\left(-1\right)^{k-1}}{2\,k-1}\,\left(\frac
{2\cdot\sin\left(x\right)-\cos\left(x\right)}
{2\cdot\cos\left(x\right)+\sin\left(x\right)}
\right)^{2\,k-1}
\]
und die Iterationsformel
\[
x_{n+1}=x_{n}-\sum_{k=1}^{P}\,\frac{\left(-1\right)^{k-1}}{2\,k-1}\,\left(\frac
{2\cdot\sin\left(x_{n}\right)-\cos\left(x_{n}\right)}
{2\cdot\cos\left(x_{n}\right)+\sin\left(x_{n}\right)}
\right)^{2\,k-1};\;\;n=0,1,\ldots
\]
Für $\arctan\left(\frac{1}{3}\right)$ ergibt sich
\[
T\left(x\right)=x-\sum_{k=1}^{P}\,\frac{\left(-1\right)^{k-1}}{2\,k-1}\,\left(\frac
{3\cdot\sin\left(x\right)-\cos\left(x\right)}
{3\cdot\cos\left(x\right)+\sin\left(x\right)}
\right)^{2\,k-1}\;;
\]
\[
x_{n+1}=x_{n}-\sum_{k=1}^{P}\,\frac{\left(-1\right)^{k-1}}{2\,k-1}\,\left(\frac
{3\cdot\sin\left(x_{n}\right)-\cos\left(x_{n}\right)}
{3\cdot\cos\left(x_{n}\right)+\sin\left(x_{n}\right)}
\right)^{2\,k-1};\;\;n=0,1,\ldots
\]
$ \\ $
Die nächste Formeln sind
\[\frac{\pi}{4}=2\arctan\left(\frac{1}{2}\right)-\arctan\left(\frac{1}{7}\right)\]
\[\frac{\pi}{4}=2\arctan\left(\frac{1}{3}\right)+\arctan\left(\frac{1}{7}\right)\]
Fixpunktfunktion und Iterationsformel für $\arctan\left(\frac{1}{2}\right)$ und $\arctan\left(\frac{1}{3}\right)$ haben wir bereits, so dass wir sie nur noch für $\arctan\left(\frac{1}{7}\right)$ benötigen. Das ergibt
\[
T\left(x\right)=x-\sum_{k=1}^{P}\,\frac{\left(-1\right)^{k-1}}{2\,k-1}\,\left(\frac
{7\cdot\sin\left(x\right)-\cos\left(x\right)}
{7\cdot\cos\left(x\right)+\sin\left(x\right)}
\right)^{2\,k-1}\;;
\]
\[
x_{n+1}=x_{n}-\sum_{k=1}^{P}\,\frac{\left(-1\right)^{k-1}}{2\,k-1}\,\left(\frac
{7\cdot\sin\left(x_{n}\right)-\cos\left(x_{n}\right)}
{7\cdot\cos\left(x_{n}\right)+\sin\left(x_{n}\right)}
\right)^{2\,k-1};\;\;n=0,1,\ldots
\]
$ \\ $
Nun kommen wir zur berühmten Formel von Machin
\[\frac{\pi}{4}
=4\arctan\left(\frac{1}{5}\right)-\arctan\left(\frac{1}{239}\right)
\]
Zwischenzeitlich haben wir reichlich Übung, so dass es kein Problem mehr ist, Fixpunktfunktion und Iterationsformel zu erstellen. Wir erhalten für $\arctan\left(\frac{1}{5}\right)$
\[
T\left(x\right)=x-\sum_{k=1}^{P}\,\frac{\left(-1\right)^{k-1}}{2\,k-1}\,\left(\frac
{5\cdot\sin\left(x\right)-\cos\left(x\right)}
{5\cdot\cos\left(x\right)+\sin\left(x\right)}
\right)^{2\,k-1}\;;
\]
\[
x_{n+1}=x_{n}-\sum_{k=1}^{P}\,\frac{\left(-1\right)^{k-1}}{2\,k-1}\,\left(\frac
{5\cdot\sin\left(x_{n}\right)-\cos\left(x_{n}\right)}
{5\cdot\cos\left(x_{n}\right)+\sin\left(x_{n}\right)}
\right)^{2\,k-1};\;\;n=0,1,\ldots
\]
und für $\arctan\left(\frac{1}{239}\right)$
\[
T\left(x\right)=x-\sum_{k=1}^{P}\,\frac{\left(-1\right)^{k-1}}{2\,k-1}\,\left(\frac
{239\cdot\sin\left(x\right)-\cos\left(x\right)}
{239\cdot\cos\left(x\right)+\sin\left(x\right)}
\right)^{2\,k-1}\;;
\]
\[
x_{n+1}=x_{n}-\sum_{k=1}^{P}\,\frac{\left(-1\right)^{k-1}}{2\,k-1}\,\left(\frac
{239\cdot\sin\left(x_{n}\right)-\cos\left(x_{n}\right)}
{239\cdot\cos\left(x_{n}\right)+\sin\left(x_{n}\right)}
\right)^{2\,k-1};\;\;n=0,1,\ldots
\]
Für Formeln mit drei und mehr Summanden verweisen wir auf die die sehr ausführlichen Darstellungen von Jörg Arndt \cite{JA}. Hier werden Zerlegungen mit bis zu 27 Summanden angegeben.
\subsection{\texorpdfstring{Berechnung einer Million Stellen von\;$\frac{\pi}{4}$}{}}
Nun kommen wir zu einer praktischen Berechnung. Wir wollen $\frac{\pi}{4}$ auf mindestens eine Million Dezimalstellen genau berechnen. Das bedeutet $\epsilon=10^{-1000000}$. Um die Fixpunktfunktion zur Berechnung von $\frac{\pi}{4}$ verwenden zu können, ist eine effiziente Berechnung von $\sin\left(x\right)$ und $\cos\left(x\right)$ erforderlich. Es gibt jede Menge Veröffentlichungen zu diesem Thema \cite{LT},\cite{RB},\cite{HG},\cite{RBA},\cite{NB}. Aus der Vielzahl der vorgeschlagenen Verfahren haben wir die folgenden drei ausgewählt:
$\\ $
$\\ $
\underline{\bf{1.\;Verwendung eines Startwertes, der schon sehr nahe bei $\frac{\pi}{4}$ liegt}}
$\\ $
Wir verwenden den Startwert \[x_0=0.7853981633975\,.\] Dieser ist bereits auf 14 Dezimalstellen genau.
$ \\ $
$ \\ $
\underline{\bf{2.\;Erhöhung der Konvergenzordnung}}
$\\ $
Um sicher zustellen, dass die Konvergenzordnung voll zum Tragen kommt, wählen wir $P=2$. Hieraus ergibt sich die Konvergenzordnung $2\cdot 2+1=5$. Das bedeutet, dass sich die Anzahl der gültigen Dezimalstellen mit jedem Schritt näherungsweise verfünffacht. Wir benötigen dann 8 Iterationsschritte um mit diesem Startwert auf eine Genauigkeit von mindestens einer Million Stellen zu kommen. Wir erhalten die Fixpunktfunktion 
\[T\left(x\right)=x
-\frac
{\sin\left(x\right)-\cos\left(x\right)}
{\sin\left(x\right)+\cos\left(x\right)}
+\frac{1}{3}\left(\frac
{\sin\left(x\right)-\cos\left(x\right)}{\sin\left(x\right)+\cos\left(x\right)}\right)^3\]
und die Fixpunktiteration
\[x_{n+1}=x_{n}-\frac
{\sin\left(x_{n}\right)-\cos\left(x_{n}\right)}
{\sin\left(x_{n}\right)+\cos\left(x_{n}\right)}
+\frac{1}{3}\left(
\frac
{\sin\left(x_{n}\right)-\cos\left(x_{n}\right)}
{\sin\left(x_{n}\right)+\cos\left(x_{n}\right)}\right)^{3}\,,
\]
die wir in der Form
\[x_{n}=x_{n-1}-\frac
{\sin\left(x_{n-1}\right)-\cos\left(x_{n-1}\right)}
{\sin\left(x_{n-1}\right)+\cos\left(x_{n-1}\right)}
+\frac{1}{3}\left(
\frac
{\sin\left(x_{n-1}\right)-\cos\left(x_{n-1}\right)}
{\sin\left(x_{n-1}\right)+\cos\left(x_{n-1}\right)}\right)^{3}\,.
\]
verwenden werden. Der Index $n$ läuft dann synchron mit der Anzahl der Iterationen.
$\\ $
$\\ $
\underline{\bf{3.\;Schritt-haltend die Genauigkeit erhöhen}}
$\\ $
Die Fixpunktiteration (wie jede andere auch) ist selbst-korrigierend. Wir müssen nicht von Anfang an gleich mit einer Genauigkeit von einer Million Stellen rechnen. Wir können mit weitaus geringerer Genauigkeit loslegen.
$\\ $
$\\ $
Für die Berechnungen verwenden wir das Computer-Algebrasytem Maple, dass sich bereits bei den Ableitungen als nützlich erwiesen hat. Die Variable \textbf{Digits} gibt die Anzahl der Dezimalstellen an mit denen Maple floating-point Zahlen berechnet; gibt man zum Beispiel \textbf{Digits}=100 vor, so rechnet Maple mit 100 Stellen Genauigkeit.
$\\ $
$\\ $
Im ersten Schritt benötigen wir eine Genauigkeit von $14\cdot 5=70$ Stellen, also ist \textbf{Digits}=70. Im 2. Schritt liegt die benötigte Genauigkeit bei $14\cdot 5^2=350$ Stellen, also müssen wir \textbf{Digits}=350 setzen. So geht es weiter bis zum 7. Schritt, wo Maple mit $14\cdot 5^7=1093750$ Dezimalstellen rechnet. Jetzt haben wir die vorgegebene Anzahl von mindestens einer Million Stellen erreicht. Deshalb brauchen wir im 8. Schritt keine weitere Erhöhung der Stellenanzahl vornehmen.
\[\]
Wir sehen uns die beteiligten Funktionen
\[f\left(x\right)=\frac{\sin\left(x\right)-\cos\left(x\right)}{\sin\left(x\right)+\cos\left(x\right)};\quad
T\left(x\right)=x+\frac{\cos\left(x\right)-\sin\left(x\right)}{\sin\left(x\right)+\cos\left(x\right)}
-\frac{1}{3}\left(\frac{\sin\left(x\right)-\cos\left(x\right)}{\sin\left(x\right)+\cos\left(x\right)}\right)^3\]
zusammen mit der Winkelhalbierenden $y=x$ in einer Abbildung an:
\begin{center}
\fbox{\includegraphics[width=0.8\linewidth]{"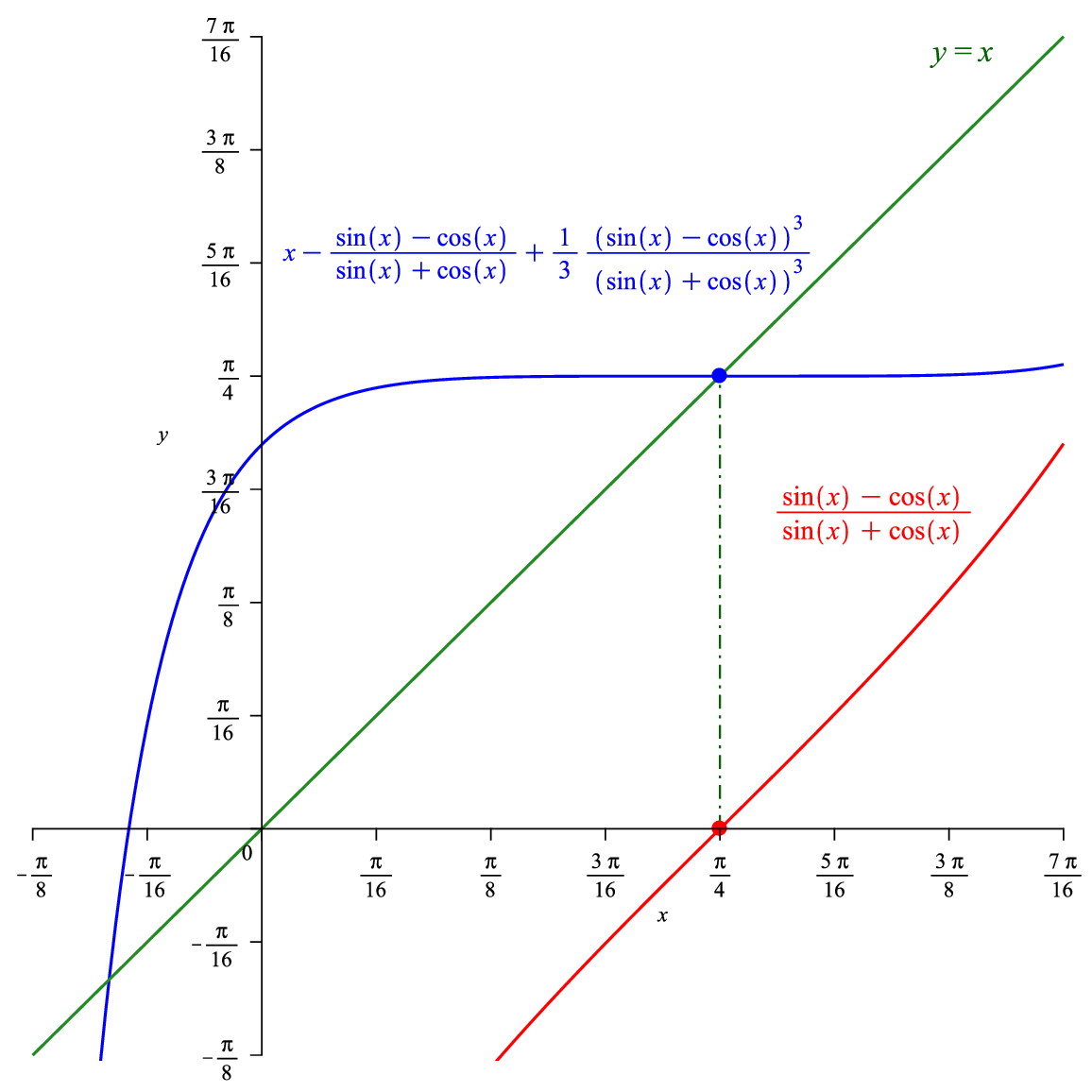"}}
\end{center}
\centerline{\textbf{\underline{Abbildung 1}}}
\[\]
In der nachfolgenden $\frac{\pi}{4}$ Berechnung haben wir die drei oben beschriebenen Optimierungsverfahren eingebaut. Wir starten mit \[x_0=0.7853981633975\] und berechnen acht Iterationen.
Zusätzlich geben wir zu jedem Schritt die Anzahl der \textbf{Digits} an, mit denen Maple gerade rechnet.
\[\underline{step\;\;1}\]
\[Digits=14\cdot 5=70\]
\[x_{{1}}
=x_0-{\frac
{\sin\left(x_{{0}}\right)-\cos\left(x_{{0}}\right)}
{\sin\left(x_{{0}}\right)+\cos\left(x_{{0}}\right)}}
+\frac{1}{3}\left(
{\frac{\sin\left(x_{{0}}\right)-\cos\left(x_{{0}}\right)}
{\sin\left(x_{{0}}\right)+\cos\left(x_{{0}}\right)}}\right)^{3}\]
\[x_{{1}}=0.7853981633974483096156608458198757210493\]
\[\left|x_{{1}}-x_{{0}}\right|={5.169038433915418012427895070765015622354\times10^{-14}}\]

\[\underline{step\;\;2}\]
\[Digits=14\cdot 5^2=350\]
\[x_{{2}}
=x_1-{\frac
{\sin\left(x_{{1}}\right)-\cos\left(x_{{1}}\right)}
{\sin\left(x_{{1}}\right)+\cos\left(x_{{1}}\right)}}
+\frac{1}{3}\left(
{\frac
{\sin\left(x_{{1}}\right)-\cos\left(x_{{1}}\right)}
{\sin\left(x_{{1}}\right)+\cos\left(x_{{1}}\right)}}\right)^{3}\]
\[x_{{2}}=0.7853981633974483096156608458198757210493\]
\[\left|x_{{2}}-x_{{1}}\right|={7.369842844775034299129366447073300446298\times10^{-68}}\]

\[\underline{step\;\;3}\]
\[Digits=14\cdot 5^3=1750\]
\[x_{{3}}
=x_2-{\frac
{\sin\left(x_{{2}}\right)-\cos\left(x_{{2}}\right)}
{\sin\left(x_{{2}}\right)+\cos\left(x_{{2}}\right)}}
+\frac{1}{3}\left(
{\frac
{\sin\left(x_{{2}}\right)-\cos\left(x_{{2}}\right)}
{\sin\left(x_{{2}}\right)+\cos\left(x_{{2}}\right)}}\right)^{3}\]
\[x_{{3}}=0.7853981633974483096156608458198757210493\]
\[\left|x_{{3}}-x_{{2}}\right|={4.348316332835180268524099971673673627949\times10^{-337}}\]

\[\underline{step\;\;4}\]
\[Digits=14\cdot 5^4=8750\]
\[x_{{4}}
=x_3-{\frac
{\sin\left(x_{{3}}\right)-\cos\left(x_{{3}}\right)}
{\sin\left(x_{{3}}\right)+\cos\left(x_{{3}}\right)}}
+\frac{1}{3}\left(
{\frac
{\sin\left(x_{{3}}\right)-\cos\left(x_{{3}}\right)}
{\sin\left(x_{{3}}\right)+\cos\left(x_{{3}}\right)}}\right)^{3}\]
\[x_{{4}}=0.7853981633974483096156608458198757210493\]
\[\left|x_{{4}}-x_{{3}}\right|={3.109106863680802672512752062886498044627\times10^{-1683}}\]

\[\underline{step\;\;5}\]
\[Digits=14\cdot 5^5=43750\]
\[x_{{5}}
=x_4-{\frac
{\sin\left(x_{{4}}\right)-\cos\left(x_{{4}}\right)}
{\sin\left(x_{{4}}\right)+\cos\left(x_{{4}}\right)}}
+\frac{1}{3}\left(
{\frac
{\sin\left(x_{{4}}\right)-\cos\left(x_{{4}}\right)}
{\sin\left(x_{{4}}\right)+\cos\left(x_{{4}}\right)}}\right)^{3}\]
\[x_{{5}}=0.7853981633974483096156608458198757210493\]
\[\left|x_{{5}}-x_{{4}}\right|={5.810429595425326132376821459074554489163\times10^{-8414}}\]

\[\underline{step\;\;6}\]
\[Digits=14\cdot 5^6=218750\]
\[x_{{6}}
=x_5-{\frac
{\sin\left(x_{{5}}\right)-\cos\left(x_{{5}}\right)}
{\sin\left(x_{{5}}\right)+\cos\left(x_{{5}}\right)}}
+\frac{1}{3}\left(
{\frac
{\sin\left(x_{{5}}\right)-\cos\left(x_{{5}}\right)}
{\sin\left(x_{{5}}\right)+\cos\left(x_{{5}}\right)}}\right)^{3}\]
\[x_{{6}}=0.7853981633974483096156608458198757210493\]
\[\left|x_{{6}}-x_{{5}}\right|={1.324558707074286945224127205648362976460\times10^{-42067}}\]

\[\]
\[\]

\[\underline{step\;\;7}\]
\[Digits=14\cdot 5^7=1093750\]
\[x_{{7}}
=x_6-{\frac
{\sin\left(x_{{6}}\right)-\cos\left(x_{{6}}\right)}
{\sin\left(x_{{6}}\right)+\cos\left(x_{{6}}\right)}}
+\frac{1}{3}\left(
{\frac
{\sin\left(x_{{6}}\right)-\cos\left(x_{{6}}\right)}
{\sin\left(x_{{6}}\right)+\cos\left(x_{{6}}\right)}}\right)^{3}\]
\[x_{{7}}=0.7853981633974483096156608458198757210493\]
\[\left|x_{{7}}-x_{{6}}\right|={8.154288164686902937605883297903247653927\times10^{-210336}}\]

\[\underline{step\;\;8}\]
\[Digits=1093750\]
\[x_{{8}}
=x_7-{\frac
{\sin\left(x_{{7}}\right)-\cos\left(x_{{7}}\right)}
{\sin\left(x_{{7}}\right)+\cos\left(x_{{7}}\right)}}
+\frac{1}{3}\left(
{\frac
{\sin\left(x_{{7}}\right)-\cos\left(x_{{7}}\right)}
{\sin\left(x_{{7}}\right)+\cos\left(x_{{7}}\right)}}\right)^{3}\]
\[x_{{8}}=0.7853981633974483096156608458198757210493\]
\[\left|x_{{8}}-x_{{7}}\right|={7.210415146280134406533351725774712443819\times10^{-1051677}}\]
$\\ $
Nach acht Iterationsschritten haben wir weit über eine  Million Dezimalstellen für $\frac{\pi}{4}$ erhalten. Genau genommen ergab sich eine Genauigkeit in der Größenordnung von 1051677 Dezimalstellen. An den negativen Potenzen der Differenzen können wir die Konvergenzordnung $5$ ablesen. Sie nehmen näherungsweise um den Faktor 5 zu.
$\\ $
$\\ $
Alle Berechnungen wurden auf einem Eigenbau-PC mit folgender Hardware ausgeführt: Motherboard ASUS PRIME A320M-K mit CPU AMD Ryzen 5 5600G 6 CORE 3.90-4.40 GHz und 32 GB RAM. Die verwendete Software war MAPLE 2025.2 von  Maplesoft, Waterloo Maple Inc. Die Software wurde ebenfalls für die Konvertierung der mathematischen Terme nach LaTeX verwendet.

\end{document}